\numberwithin{equation}{section}
\def\beq{\begin{equation}}
\def\eeq{\end{equation}}
\def\beqs{\begin{equation*}}
\def\eeqs{\end{equation*}}
\newtheorem{theorem}{Theorem}[section]
\newtheorem{lemma}[theorem]{Lemma}
\newtheorem{proposition}[theorem]{Proposition}
\theoremstyle{definition}
\newtheorem{definition}[theorem]{Definition}
\newcommand{\esssup}{\mathop{\mathrm{ess\,sup}}}
\def\R{{\bf R}}
\def\C{{\bf C}}
\def\J{{\cal J}}
\def\DA{{\cal D}(A)}
\def\words#1{\quad\hbox{#1}\quad}
\def\wwords#1{\qquad\hbox{#1}\qquad}
\numberwithin{equation}{section}
\title{
	Spectral Filtering of Interpolant Observables \\
	for a Discrete-in-time Downscaling \\
    Data Assimilation Algorithm
}
\author{Emine Celik$^a$, Eric Olson$^a$ and Edriss S. Titi$^{b,c}$}
\date{February 1, 2019}
\begin{document}
{
    \catcode`\@=11
    \gdef\curl{\mathop{\operator@font curl}\nolimits}
}
\maketitle
\begin{center}
\textit{$^a$ Department of Mathematics and Statistics, University of Nevada, Reno\\
Reno, NV  89557, U. S. A.}\\
\textit{$^b$ Department of Mathematics, Texas A\&M University, 3368 TAMU\\
College Station, TX 77843-3368, U. S. A.}\\
\textit{$^c$ Department of Computer Science and Applied Mathematics,
	 The Weizmann \\ Institute of Science,
	Rehovot 76100, Israel}\\
	{\ }\\
Email addresses:  \texttt{ecelik@unr.edu, ejolson@unr.edu, titi@math.tamu.edu, edriss.titi@weizmann.ac.il}
\end{center}

\begin{abstract}
We describe a spectrally-filtered discrete-in-time downscaling data
assimilation
algorithm and prove, in the context of the two-dimensional
Navier--Stokes equations, that this algorithm works for a general
class of interpolants, such as those based on local spatial averages as
well as point measurements of the velocity.
Our algorithm is based on the classical technique of inserting
new observational data directly into the dynamical model as it is
being evolved over time, rather than nudging,
and extends previous results in which the observations were
defined directly in terms of an orthogonal projection onto
the large-scale (lower) Fourier modes.
In particular, our analysis does not require the interpolant to be
represented by an orthogonal projection,
but requires only the interpolant to satisfy
a natural approximation of the identity.
\end{abstract}
{\bf Keywords:} Discrete-in-time data assimilation, Downscaling algorithm,
two-dimensional Navier-Stokes equations.
\par\noindent
{\bf AMS subject classifications:} 35Q30, 37C50, 76B75, 93C20.

\section{Introduction}\label{intro}
The goal of data assimilation is to optimally combine known
information about the dynamics of a solution with low-resolution
observational measurements of that solution over time to create better
and better approximations of the current state.
While model error in the dynamics and measurement error in the
observations are significant issues with practical data assimilation,
we consider here the error-free case in order to study the
role played by spatial filtering.
In particular, even if the observations are error free,
in certain geophysical models
they can contain high-frequency spillover and gravity waves
which need to be controlled in order for the data assimilation
to perform well.
Additional issues arise because commonly used filtering
techniques can lead to non-orthogonal interpolants.
These issues are the focus of the current paper.
Our results extend the work of Hayden, Olson and Titi
\cite{Hayden2011} on discrete-in-time data assimilation
from the case where the low-resolution observations are given by
projection onto the low Fourier modes
to both the first and second type of general interpolant
observables that appear in Azouani, Olson and Titi \cite{Azouani2014},
see also Bessaih, Olson and Titi~\cite{Bessaih2015}.
To make this extension, we apply a spectral filter based on the
Stokes operator to the interpolant observables and call the
new method spectrally-filtered discrete-in-time downscaling data assimilation.
It is worth noting that much of advances in the accuracy of present
day weather forecasting have come from better filtering techniques,
see for example Budd, Freitag and Nichols \cite{Budd2011}.
From this point of view, the analytic results presented here for
spectral filtering may be seen as a first step towards a
rigorous analysis of more complicated methods.

An alternative algorithm for discrete-in-time data assimilation
based on nudging was recently studied by Foias, Mondaini and Titi
in \cite{Foias2016}.
In that work it was shown that nudging works for interpolants of 
what is known by now as type-I, such as those which are based
on local coarse spatial scale volume elements measurements without any additional
filtering---the dissipation provided by the Navier--Stokes equations
themselves is sufficient; however,
a similar treatment for type-II interpolant observables is missing.
The algorithm studied here is based on the classical technique
of inserting the observational data directly into the model as it
is evolved forward in time, see for example Daley \cite{Daley1991}
and references therein.
When inserting the data directly into the model,
the need for filtering becomes more evident.
Moreover, by developing a spectrally-filtered algorithm we are
also able to handle type-II interpolant observable.
Although
it is likely a similar technique could be applied to a
nudging algorithm to handle type-II interpolant observables,
we do not pursue that line of analysis here,
but will be reported in future work.

The two-dimensional incompressible Navier--Stokes equations are given by
\begin{plain}\begin{equation}\label{navierstokes}
	{\partial U\over\partial t}-\nu\Delta U
		+\nabla P + (U\cdot\nabla) U = f,\qquad
	\nabla\cdot U = 0.
\end{equation}\end{plain}%
Following Constantin and Foias~\cite{Constantin1988},
Foias, Manley, Rosa and Temam~\cite{Foias2001},
Robinson~\cite{Robinson2001} and
Temam~\cite{Temam1983}, and in order to simplifying our presentation and fix ideas,
we consider flows on the domain $\Omega=[0,L]^2$ equipped
with periodic boundary conditions.
Let ${\cal V}$ be the set of all divergence-free $L$-periodic
trigonometric polynomials with zero spatial averages,
$V$ be the closure of ${\cal V}$ in $H^1(\Omega,\R^2)$,
$V^*$ be the dual of $V$, and
$P_\sigma$ be the orthogonal projection of $L^2(\Omega;\R^2)$
onto $H$, where
$H$ is the closure of ${\cal V}$ in $L^2(\Omega,\R^2)$.
Define $A\colon V\to V^*$ and $B\colon V\times V\to V^*$
to be the unique continuous extensions
for $u,v\in{\cal V}$
of the operators given by
$$
    Au=-P_\sigma \Delta u\qquad\hbox{and}\qquad
    B(u,v)=P_\sigma (u\cdot\nabla v).
$$
Remark that in periodic case $A=-\Delta$, thus, the two-dimensional incompressible
Navier--Stokes equations may be written as
\begin{plain}\begin{equation}\label{2dns}
    {dU\over dt}+\nu AU+B(U,U)=f
\end{equation}\end{plain}%
with initial condition $U_0\in V$, at time $t=t_0$.
Here
$\nu>0$ is the kinematic viscosity, and the body force
$f\in L^\infty([t_0,\infty);H)$
is taken to be divergence free, but possibly time dependent.

When the force is time independent, as shown in any of the aforementioned
references, equations (\ref{2dns}) are well posed with
unique regular solutions depending continuously on the initial
conditions and which exist for all time, $t\ge t_0$.
The case when the force depends on time is somewhat more delicate
and we shall place further assumptions on $f$ in Section \ref{secprelim},
see also Appendix \ref{apriori},
to ensure the resulting solutions have enough regularity
for the subsequent analysis.
In either case, we define the semi-process $S$ as the
solution operator
that maps initial conditions into their subsequent time
evolution by $S(t,t_0;U_0)=U(t)$ for all $t\ge t_0$.

We now describe the general interpolant observables to which our
results will apply.
These interpolants are inspired by the modes, nodes and
volume elements of Jones and Titi \cite{Jones1993},
see also Foias and Titi \cite{Foias1991},
and are equivalent to
the first and second types of general interpolant observables
that appear in \cite{Azouani2014}, see also \cite{Bessaih2015} and the
general framework presented in Cockburn, Jones and Titi \cite{Cockburn1997}.
In particular, we state
\begin{definition}\label{interpolants}
A linear operator ${I_h}\colon V\to L^2$ is said
to be a {\it type-I interpolant observable\/} if there
exists $c_1>0$ such that
\beq\label{typeone}
\|U-{I_h}U\|_{L^2}^2\le c_1h^2\|U\|^2
\wwords{for all} U\in V.
\eeq
A linear operator ${I_h}\colon \DA\to L^2$ is said to be a
{\it type-II interpolant observable\/} if
\beq\label{typetwo}
\|U-{I_h}U\|_{L^2}^2 \le c_1h^2\big(\|U\|^2+h^2|AU|^2\big)
\wwords{for all} U\in \DA.
\eeq
\end{definition}

Here $\DA=H^2(\Omega)\cap V$ is the domain of $A$ viewed as an operator into $L^2$.
Specifically, in terms of Fourier modes, let
$$
	H = \left\{\,
		\sum_{k\in\J}\widehat U_ke^{ik\cdot x}
		\words{:}
		\widehat U_k\in\C^2,\quad
		\widehat U_k^* = \widehat U_{-k},\quad k\cdot \widehat U_k=0
		\words{and}
		\sum_{k\in\J}|\widehat U_k|^2 <\infty
	\,\right\},
$$
where
$$
\J=\left\{ \frac{2\pi}{L}(n_1,n_2):
	n=(n_1,n_2)\in\mathbb{Z}^2\backslash\{(0,0)\}  \right\}.
$$
For notational convenience assume $\widehat U_0=0$ even
though this coefficient doesn't enter into the above characterization
of $H$.
We employ the notations
$$
	|U|=\|U\|_0,\qquad
	\|U\|=\|U\|_1
	\wwords{and}
	|AU|=\|U\|_2$$
where
\begin{equation}\label{alphanorm}
\|U\|_\alpha^2=L^2\sum_{k\in\J}|k|^{2\alpha}|\widehat U_k|^2
\wwords{when}
U=\sum_{k\in\J} \widehat U_k e^{ik\cdot x}.
\end{equation}
Further define
$V_\alpha= \{\, U\in H : \|U\|_\alpha<\infty\,\}$.  Consequently
$\DA=V_2$ and $V=V_1$.

In Definition \ref{interpolants} we note that
$h$ is a length scale corresponding to the observation resolution
and $c_1$ is a dimensionless constant.
For example, suppose nodal measurements of the velocity are given by
$$
	\big(U(x_1),U(x_2),\ldots,U(x_d)\big),
$$
where $x_i\in\Omega$ have been chosen in such a way that
$$
		\sup_{x\in\Omega} \inf \big\{\, \|x-x_j\|: j=1,2,\ldots,d\,\big\}
		\le h.
$$
Then
\begin{plain}$$
	{I_h(U)(x)=
    \sum_{j=1}^{d} U(x_i) \widetilde\chi_j(x)}
\wwords{where}
	{\widetilde\chi_j(x)= \chi_j(x)
		- {1\over |\Omega|}\int_\Omega \chi_j}
$$\end{plain}%
with
\begin{plain}
$$
	\chi_j(x)=
\cases{
	1 & if $\|x-x_j\|<\|x-x_i\|$ for all $i\ne j$\cr
	0 & otherwise
}
$$\end{plain}%
is a type-II interpolant observable.

It is worth reflecting that the type-II interpolant observable
described above naturally results in a piecewise constant vector 
field which is discontinuous.
Although $I_h(U)\in L^2$ as required, the Fourier transform of
the resulting vector field possesses a significant high-frequency 
component due to the discontinuities.  
A similar interpolant was considered in \cite{Gesho2016}
for numerical simulations of a data-assimilation method based on nudging.
Those computations show that the adverse effects of the 
high-frequency spill over which result from the spatial discontinuities 
can be mitigated by appropriate convolution with a smoothing kernel.
The spectral filtering considered in this work also removes
the high-frequency component in spatial Fourier representation while 
enjoying additional approximation properties useful for the 
analysis of the resulting data assimilation algorithm.

We now introduce the spectrally-filtered discrete-in-time data
assimilation algorithm which forms the focus of our study.
Let $P_\lambda\colon H\to H$ be the orthogonal projection
onto the Fourier modes with wave numbers $k$ such
that $|k|^2\le \lambda$ given by
$$
	P_\lambda U = \sum_{|k|^2\le \lambda} \widehat U_k e^{ik\cdot x}.
$$
and let $Q_\lambda=I-P_\lambda$ be the
orthogonal complement of $P_\lambda$.
Now, given $\lambda>0$ and $I_h$ define
\begin{equation}\label{filtered}
	{J=P_\lambda P_\sigma  I_h}
\wwords{and}
	E=I-J.
\end{equation}

Note, although no additional orthogonality or
regularity properties other than those appearing in Definition
\ref{interpolants} have been assumed
on $I_h$, the above spectral filtering yields an
operator~$J$ which is nearly orthogonal and has
a range contained in $\DA$.
The downscaling data assimilation algorithm studied in this
paper may now be stated as

\begin{definition}\label{algorithm}
Let $U$ be an exact solution of \eqref{2dns} which evolves according
to dynamics given by the semi-process $S$.
Let $t_n=t_0+n\delta$ be a sequence of times for which
partial observations of $U$ are interpolated by $I_h$.
Then the approximating solution $u$ given by
\begin{plain}
$$
\left\{
\eqalign{
    u_0&=J U(t_0)\cr
    u_{n+1}&= E S(t_{n+1},t_n;u_n)+J U(t_{n+1})\cr
    u(t)&=S(t,t_n;u_n)\qquad\hbox{for}\qquad t\in[t_n,t_{n+1})\cr
}\right.
$$
is what we shall call
{\it spectrally-filtered discrete-in-time downscaling data assimilation}.
\end{plain}%
\end{definition}
We stress that only the spectrally filtered low-resolution observations
of the exact solution represented by $J U(t_n)$ for $t_n\le t$ are
used to construct the approximating solution $u$ at time~$t$.
Since we assume the dynamics governing the evolution of
$U$ to be known, then exact knowledge of the initial condition $U(t_0)=U_0$
would, in theory, obviate the need for data assimilation at subsequent times.
Of course, knowing the exact dynamics and being able to practically
compute with them are two different things.
Although not the focus of the present research,
the algorithm stated above may also be used
to stabilize the growth of numerical error.
Putting such numerical considerations aside,
we view the data assimilation algorithm given in
Definition~\ref{algorithm} as a way of improving
estimates of the unknown state of $U$ at time $t$ by means of known
dynamics and a time-series of low-resolution observations.

Intuitively, at each time $t_{n+1}$ a new measurement is used to
kick the approximating solution towards the exact solution by constructing
of an improved approximation of the current state $u_{n+1}$ which may be
seen as a combination of a prediction
based on the previous approximation
and a correction based on the observation.
This improved approximation then serves as an initial condition
from which to further evolve the approximating solution.
Since $JU(t_{n+1})$ is supported on a finite number of Fourier modes,
the regularity of $u_{n+1}$ is determined by
$ES(t_{n+1},t_n;u_n)$.
For type-I interpolant observables our working assumptions described
after Proposition \ref{attbnd} in Section \ref{intro} shall
imply that $u_{n+1}\in V$ and for type-II that $u_{n+1}\in\DA$.

Although we have taken the sequence of observation times $t_n$ to
be equally spaced, intuitively one might imagine
for a suitably small value of $\delta$ that
it would be sufficient for
\begin{equation}\label{imagine}0<t_{n+1}-t_n\le\delta,
\wwords{with} t_n\to\infty,\words{as} n\to\infty.
\end{equation}
Our analysis, however, makes use of a minimum distance between
$t_{n+1}$ and $t_n$ as well as the maximum.
Measurements need to be inserted frequently enough to overcome
the tendency for two nearby solutions to drift apart, while
at the same time the possible lack of orthogonality in our
general interpolant observables means measurements should not
be inserted too frequently.
Specifically, we need to have enough time to elapse between each insertion to allow enough time for the use of the dynamics of the equation, i.e. integrating the Navier-Stokes equations for long enough time to correct the high modes. Our algorithm consists of two steps:
\textit{Step 1.} Inserting the coarse spatial scale measurements.
\textit{Step 2.} Integrating the Navier-Stokes equations for short time, but not too short, to recover and correct the missing high modes, i.e. the fine spatial scales of the solutions.
 Preliminary numerical simulations further indicate this requirement 
is likely physical and not merely a technical condition used 
by our analysis.
Given times $t_n$ that satisfy (\ref{imagine})
it would
be straightforward to construct a subsequence of observations $t'_n$
such that $\delta/2 < t'_{n+1}-t'_n \le 2\delta$
and obtain results similar to the ones presented here.
We leave such a refinement to the reader.

Note that the algorithm described above reduces to the discrete
data assimilation method studied in \cite{Hayden2011}
by taking $I_h=P_\lambda$.
In particular, when the interpolant observable itself is
given by an orthogonal projection onto the large-scale
Fourier modes.
In this work $I_h$ can by any interpolant operator satisfying
Definition \ref{interpolants}.
Carefully adjusting
the relationship between $h$ and $\lambda$ then allows us to
prove our main result, stated as
\begin{theorem}\label{mainresult}
Let $U$ be a solution to the two-dimensional incompressible
Navier--Stokes equations (\ref{2dns}) and $u(t)$, for $t\ge t_0$
be the process given by
Definition \ref{algorithm}.
Then, for every $\delta>0$ there exists $h>0$ and $\lambda>0$
depending only on $c_1$, $f$, $\nu$
such that
$$
	|u(t)-U(t)|\to 0,\words{exponentially in time, as} t\to\infty.
$$
Here $c_1$ is the constant
in Definition \ref{interpolants}
given by the general interpolant observables.
\end{theorem}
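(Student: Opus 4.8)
The plan is to track the error $w_n = u(t_n^-) - U(t_n)$ at observation times and show it contracts by a fixed factor over each step $[t_n,t_{n+1}]$, from which exponential decay follows. The natural energy to monitor is $|w_n|^2$, though to close the argument one must also carry an $H^1$ (and for type-II an $H^2$) bound on the approximating trajectory $u$ so that the nonlinear term $B$ can be controlled; the a priori bounds promised in Section~\ref{secprelim} and Appendix~\ref{apriori}, together with Proposition~\ref{attbnd}, supply a uniform radius $R$ (in the appropriate norm) into which both $U$ and $u$ eventually fall. First I would establish that the update map $u\mapsto Eu + JU$ is, in the $L^2$ norm, nearly a projection: since $J = P_\lambda P_\sigma I_h$ and $E = I - J$, the filtered observation agrees with $U$ up to the interpolation error \eqref{typeone}/\eqref{typetwo} plus the tail $Q_\lambda U$, which is small in $L^2$ by a factor $1/\lambda$ against $\|U\|$ (or $|AU|$); so the instantaneous effect of inserting data at $t_{n+1}$ replaces $w$ by $Ew$ plus a term of size $O(h\|\cdot\| + \lambda^{-1/2}\|\cdot\|)$, and one checks $\|Ew\|_{L^2}\le (1 + C h + C\lambda^{-1/2})\,\|Q_\lambda w\|_{L^2} + (\text{small})\|P_\lambda w\|_{L^2}$, i.e. $E$ is close to $Q_\lambda$ when $h$ is small and $\lambda$ large.

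The heart of the matter is what the Navier--Stokes flow does to the error on the interior of the interval. Writing $w(t) = u(t) - U(t)$, subtracting the two copies of \eqref{2dns} gives
\[
\frac{dw}{dt} + \nu A w + B(w,U) + B(u,w) = 0,
\]
and the standard energy estimate, using $(B(u,w),w)=0$ and the 2D interpolation inequality $\|B(w,U)\cdot w\|_{L^1} \le c\,|w|\,\|w\|\,\|U\|$ (or its $H^2$-based variant bounding $\|U\|$ by $|AU|$), yields a differential inequality of the form $\frac{d}{dt}|w|^2 + \nu\|w\|^2 \le c\,|w|^2\,\|U\|^2/\nu$. Because $P_\lambda w(t_n^+) = 0$ by construction, at the start of each interval $w$ lives in the high modes, so $\|w\|^2 \ge \lambda |w|^2$; feeding this in gives $\frac{d}{dt}|w|^2 \le -(\nu\lambda - c R^2/\nu)|w|^2$ as long as $w$ stays high-mode-dominated, and choosing $\lambda$ large enough that $\nu\lambda - cR^2/\nu \ge \nu\lambda/2$ produces genuine exponential decay $|w(t_n + \delta)|^2 \le e^{-\nu\lambda\delta/2}|w(t_n^+)|^2$. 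The subtlety is that $P_\lambda w$ does not stay zero: the nonlinearity feeds energy back into low modes, so one must simultaneously show that over the short interval $\delta$ the low-mode part of $w$ cannot grow by more than a controlled amount (a Gr\"onwall estimate on $|P_\lambda w|$ driven by $|Q_\lambda w|$ and the bounded data), and this is exactly why $\delta$ must not be too large.

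Combining the two ingredients, one step of the algorithm sends $|w_n|$ to roughly $(\kappa_1 + \kappa_2)|w_n|$ where $\kappa_1 \approx e^{-\nu\lambda\delta/4}$ comes from dissipation of the high modes and $\kappa_2$ collects the re-injected low-mode energy and the interpolation/filtering defect $\sim h + \lambda^{-1/2}$ times the flow constants; the non-orthogonality of $I_h$ also contributes a term proportional to $\delta$ (measurements inserted too often would let errors accumulate), which is why a lower bound on $t_{n+1}-t_n$ is used. For fixed $\delta$ one first picks $\lambda$ large so that $\kappa_1 < 1/2$, then $h$ small (which forces $\lambda$ via the required relation $h^2\lambda \lesssim 1$, the one genuinely delicate balance) so that $\kappa_2 < 1/4$, giving a per-step contraction factor $<1$ depending only on $c_1,f,\nu$. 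Induction then yields $|w_n|\le \rho^n|w_0|$ with $\rho<1$, and a final short-time continuity estimate controls $|u(t)-U(t)|$ for $t$ between observation times, giving the claimed exponential convergence. The main obstacle I anticipate is the bookkeeping that keeps the low-mode feedback and the lack of orthogonality in $I_h$ subordinate to the high-mode dissipation uniformly in $n$ — i.e. showing the constants $R$, $\kappa_2$ truly depend only on $c_1,f,\nu$ and not on the solution or the step count — which is where the a priori absorbing-ball estimates and the precise tuning of $h$ versus $\lambda$ must be invoked carefully.
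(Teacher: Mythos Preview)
Your proposal has a genuine gap at its central step. The claim that ``$P_\lambda w(t_n^+)=0$ by construction'' is false for a general interpolant $I_h$: after the update one has $w(t_n^+)=-E\tilde v_n$ with $E=I-P_\lambda P_\sigma I_h$, so
\[
P_\lambda w(t_n^+)=-P_\lambda P_\sigma\big(\tilde v_n-I_h\tilde v_n\big),
\]
which vanishes only in the special case $I_h=P_\lambda$ treated in \cite{Hayden2011}. For the general interpolants of Definition~\ref{interpolants} this low-mode residual is merely small, of order $h\|\tilde v_n\|$ by \eqref{typeone}, and therefore the improved Poincar\'e inequality $\|w\|^2\ge\lambda|w|^2$ on which your decay rate $e^{-\nu\lambda\delta/2}$ rests does not hold. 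You partly acknowledge that $E$ is only ``close to $Q_\lambda$,'' but the argument then proceeds as if it were exact; the fallback of tracking $P_\lambda w$ and $Q_\lambda w$ separately at the $L^2$ level does not close either, because the interpolation defect is controlled by $\|\tilde v_n\|$, not by $|\tilde v_n|$, so an $L^2$-only bookkeeping cannot absorb it.

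The paper avoids this by running the energy estimate one derivative higher. For type-I interpolants the primary quantity is $\|v\|^2$, obtained by pairing the error equation with $Av$ and using the two-dimensional periodic identity \eqref{borthp} to reduce the nonlinearity to $(B(v,v),AU)$; this gives
\[
\frac{d}{dt}\|v\|^2+\nu|Av|^2\le \frac{c^2}{\nu}\,|v|^2\,|AU|^2.
\]
The $|v|^2$ on the right is then handled by the continuous-dependence bound of Theorem~\ref{beta} together with the key inequality $|v_n|^2=|E\tilde v_n|^2\le\lambda^{-1}(1+\varepsilon)\|\tilde v_n\|^2$ from Proposition~\ref{Ebounds}. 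Thus the $\lambda^{-1}$ gain enters not through an improved Poincar\'e on the error but through the mismatch of norms in the $E$-bound: the update makes the $L^2$ norm of the error small relative to its $H^1$ norm, and it is the $H^1$ norm that is propagated, decaying by the ordinary factor $e^{-\lambda_1\nu\delta}$. The resulting contraction constant is $\gamma\approx(1+\varepsilon)\big(e^{-\lambda_1\nu\delta}+C/\lambda\big)$, made less than one by first taking $\lambda$ large and then $h$ small. For type-II interpolants the same idea is executed at the $H^2$ level via the vorticity formulation.
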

\noindent

Since we have assumed the observational measurements to be noise-free
and that the exact solution evolves according to known dynamics, it is
natural to obtain a result in which the difference between the exact
solution $U$ and the approximation $u$ converges to zero over time.
We further remark that if by chance $u(t_n)=U(t_n)$
at any of the data assimilation steps,
then $u(t)=U(t)$ for all $t\ge t_n$.
In particular, if somehow $U_0$ is known exactly and we take $u_0=U_0$
as the first step of Definition \ref{algorithm},
then $u(t)=U(t)$ for all $t\ge t_0$.

This paper is organized as follows.
In section 2 we set our notation, recall some facts about the
Navier--Stokes equations and prove some preliminary results
regarding the spectrally-filtered interpolant observables that
will be used in our subsequent analysis.
Section 3 proves our main result for type-I
interpolant observables while section 4 treats the case
of type-II interpolant observables.
We finish with some concluding remarks concerning the
dependency of $h$ and $\lambda$ on $\delta$ and the other
physical parameters in the system.

\section{Preliminaries}\label{secprelim}
We begin by recalling some inequalities.
Writing the smallest eigenvalue of the Stokes operator $A$ as
$\lambda_1=(2\pi/L)^2$ we have the
Poincar\'e inequalities
\begin{equation}\label{poincareV}
    \lambda_1 |U|^2\le \|U\|^2\wwords{for} U\in V
\end{equation}
and
\begin{equation}\label{poincareDA}
	\lambda_1^2 |U|^2\le \lambda_1 \|U\|^2\le |AU|^2
\wwords{for}
	U\in\mathcal{D}(A).
\end{equation}
An advantage of using the projection $P_\lambda$ in our
data assimilation algorithm,
rather than a different type of spatial filtering,
is that this directly leads to
improved Poincar\'e inequalities and reverse
inequalities which are, respectively, given by
\beq\label{pimproved}
	\lambda |Q_\lambda U|^2 \le\|Q_\lambda U\|^2
\wwords{and}
    \lambda^2 |Q_\lambda U|^2
	\le \lambda \|Q_\lambda U\|^2\le |A Q_\lambda U|^2
\eeq
and
\beq\label{preverse}
\|P_\lambda U\|^2\le\lambda |P_\lambda U|^2
\wwords{and}
|AP_\lambda U|^2\le\lambda^2 |P_\lambda U|^2.
\eeq
All of the inequalities given in \eqref{poincareV},
\eqref{poincareDA}, \eqref{pimproved}
and \eqref{preverse} may easily be verified via Fourier series.
We also recall Agmon's
inequality \cite{Agmon2010} (see also \cite{Constantin1988}) as
\begin{equation}\label{agmon}
\|U\|_{L^\infty}\le C|U|^{1/2}|AU|^{1/2}.
\end{equation}
Here $C$ is a dimensionless constant depending only
on the domain $\Omega$.

As mentioned in the introduction, the spectrally filtered
interpolation operator $J$ given by (\ref{filtered})
possesses approximate orthogonality and regularity properties
that the original interpolant observable $I_h$ may fail to have.
We summarize these properties in
\begin{proposition}\label{Ebounds}
Let $c_1$ be the dimensionless constant appearing
in Definition \ref{interpolants}.
For type-I interpolant observables setting
$\varepsilon=c_1\lambda h^2$ yields
$$
	|EU|^2\le \lambda^{-1}(1+\varepsilon)\|U\|^2
\words{and}
	\|EU\|^2\le (1+\varepsilon)\|U\|^2,
\words{for every} U\in V.
$$
For type-II interpolant observables
setting $\varepsilon=c_1\lambda_1^{-1}\lambda^2h^2(1+\lambda_1h^2)$
yields
$$
	|EU|^2\le(\lambda\lambda_1)^{-1}(1+\varepsilon) |AU|^2,\qquad
	\|EU\|^2\le \lambda_1^{-1}(1+\varepsilon) |AU|^2
$$
and
$$|AEU|^2\le (1+\varepsilon) |AU|^2
\wwords{for every} U\in \mathcal{D}(A).$$
\end{proposition}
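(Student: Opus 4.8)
\emph{Proof proposal.} The plan is to build everything on the algebraic identity
$$
EU \;=\; U - P_\lambda P_\sigma I_h U \;=\; P_\lambda P_\sigma(U - I_h U) + Q_\lambda U ,
$$
valid for every $U\in V$, and in particular for $U\in\DA$. It follows by writing $U = P_\lambda U + Q_\lambda U$, using $P_\sigma U = U$ (which holds because $U\in H$, not merely $U\in L^2$), and using that the Fourier truncation $P_\lambda$ commutes with the Leray projection $P_\sigma$. This identity splits the interpolation error operator $E$ into a \emph{low-mode} part $P_\lambda P_\sigma(U - I_h U)$, whose smallness comes from the approximation-of-the-identity property in Definition \ref{interpolants}, and a \emph{high-mode} truncation part $Q_\lambda U$, whose smallness comes from the Poincar\'e-type inequalities. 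The decisive structural point is that the two parts have disjoint Fourier support ($|k|^2\le\lambda$ versus $|k|^2>\lambda$), so they are orthogonal in every $V_\alpha$; hence for $\alpha\in\{0,1,2\}$ one has the Pythagorean splitting $\|EU\|_\alpha^2 = \|P_\lambda P_\sigma(U - I_h U)\|_\alpha^2 + \|Q_\lambda U\|_\alpha^2$, and the two terms may be estimated independently.

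For the low-mode term I would first apply the reverse Poincar\'e inequalities \eqref{preverse} to trade an $\|\cdot\|$ or $|A\cdot|$ norm for the $L^2$ norm at the cost of a factor $\lambda$ or $\lambda^2$, then discard the two norm-one orthogonal projections $P_\lambda$ and $P_\sigma$, and finally invoke \eqref{typeone} (respectively \eqref{typetwo}) to bound $|U-I_hU|^2$; in the type-II case one also absorbs $\|U\|^2\le\lambda_1^{-1}|AU|^2$ via \eqref{poincareDA}, after which the resulting constant is exactly the $\varepsilon$ in the statement. For the high-mode term I would use $\|Q_\lambda U\|_\alpha\le\|U\|_\alpha$ together with the improved Poincar\'e inequality \eqref{pimproved} and \eqref{poincareDA}: for instance $|Q_\lambda U|^2\le\lambda^{-1}\|Q_\lambda U\|^2\le\lambda^{-1}\|U\|^2$ for type-I, and $|Q_\lambda U|^2\le\lambda^{-1}\|Q_\lambda U\|^2\le(\lambda\lambda_1)^{-1}|AQ_\lambda U|^2\le(\lambda\lambda_1)^{-1}|AU|^2$ for type-II, with the analogous chains for $\|Q_\lambda U\|^2$ and $|AQ_\lambda U|^2$. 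Summing the two contributions in each of the five asserted inequalities and collecting the powers of $\lambda$, $\lambda_1$, $h$, $c_1$ then yields precisely the stated bounds; the only place requiring the harmless normalization $\lambda\ge\lambda_1$ is in matching the low-mode remainder against $\varepsilon$ in the first two type-II estimates.

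I would also record the regularity half of the assertion: since $P_\lambda P_\sigma I_h U$ is a trigonometric polynomial, it lies in $\DA$ (indeed in $C^\infty$), so $EU\in V$ whenever $U\in V$ and $EU\in\DA$ whenever $U\in\DA$, which is the statement that $J$ has smooth, essentially finite-dimensional range.

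I do not expect a serious obstacle. Once the decomposition $EU = P_\lambda P_\sigma(U-I_hU) + Q_\lambda U$ and its orthogonality are in hand, the rest is bookkeeping with the inequalities recalled in Section \ref{secprelim}. The one step that must be handled with care is precisely establishing that identity --- using $P_\sigma U = U$ and the commutativity of $P_\lambda$ with $P_\sigma$ --- and thereafter keeping the powers of $\lambda$ and $\lambda_1$ straight so that each remainder term lines up with the corresponding $\varepsilon$.
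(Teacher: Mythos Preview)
Your proposal is correct and follows essentially the same route as the paper: the decomposition $EU = Q_\lambda U + P_\lambda P_\sigma(U-I_hU)$, orthogonality of the two pieces, then \eqref{pimproved}--\eqref{preverse} on the high-mode part and Definition~\ref{interpolants} (plus \eqref{poincareDA} in the type-II case) on the low-mode part. Your remark that the normalization $\lambda\ge\lambda_1$ is needed to absorb $c_1\lambda h^2(1+\lambda_1h^2)$ into $\varepsilon$ is accurate and is used tacitly in the paper.
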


\begin{proof}[Proof of Proposition \ref{Ebounds}]
Estimate $|EU|$ for type-I interpolant observables as
\begin{align*}
	|EU|^2&=|U-JU|^2=|U-P_\lambda U+P_\lambda U-P_\lambda P_\sigma I_h U|^2\\
	&\le |Q_\lambda U|^2+|P_\lambda(U-P_\sigma I_hU)|^2
	\le \lambda^{-1}\|U\|^2+ |U-P_\sigma I_hU|^2\\
	&=\lambda^{-1}\|U\|^2+ |P_\sigma(U- I_hU)|^2\le
	\lambda^{-1}\|U\|^2+ \|U- I_hU\|_{L^2}^2\\
	&\le \lambda^{-1}\|U\|^2+c_1h^2\|U\|^2
		=\left(\lambda^{-1}+c_1h^2\right)\|U\|^2.
\end{align*}
From the definition of $\varepsilon$ it follows that
\beq\label{EU}
	|EU|^2\le  \lambda^{-1}\big(1+\varepsilon\big)\|U\|^2.
\eeq
Similarly bound $\|EU\|$ as
\begin{align*}
	\|EU\|^2 &=\|U-JU\|^2=\|Q_\lambda U\|^2+\|P_\lambda(U-P_\sigma I_hU)\|^2\\
	&\le\|U\|^2+\lambda|U-P_\sigma I_hU|^2
	\le\|U\|^2+\lambda c_1h^2\|U\|^2
	\le (1+\varepsilon)\|U\|^2.
\end{align*}
Now, estimate $|EU|$ for type-II interpolant observables as
\beq
\begin{aligned}
	|EU|^2
	&=|U-P_\lambda P_\sigma I_hU|^2
	=|U-P_\lambda U+P_\lambda U-P_\lambda P_\sigma I_hU|^2\\
	&=|U-P_\lambda U|^2+|P_\lambda P_\sigma(U-I_hU)|^2
	\le |Q_\lambda U|^2 + \|U- I_h U\|_{L^2}^2\\
	&\le |Q_\lambda U|^2+c_1h^2\left(\|U\|^2+h^2|AU|^2\right)\\
	&\le \left( \frac 1{\lambda}+c_1h^2 \right)\|U\|^2+c_1h^4|AU|^2\\
	&\le \left[\frac{1}{\lambda_1}\left( \frac 1{\lambda}+c_1h^2 \right)
		+c_1h^4\right]|AU|^2.
\end{aligned}
\eeq
Setting $\varepsilon=c_1\lambda_1^{-1}\lambda^2h^2(1+\lambda_1h^2)$
yields that
$$
	|EU|^2\le (\lambda\lambda_1)^{-1}(1+\varepsilon)|AU|^2.
$$
Next, estimate $\|EU\|$ as
\beq
\begin{aligned}
	\|EU\|^2
	&=\|U-P_\lambda P_\sigma I_hU\|^2
	=\|U-P_\lambda U+P_\lambda U-P_\lambda P_\sigma I_hU\|^2\\
	&=\|U-P_\lambda U\|^2+\|P_\lambda P_\sigma (U-I_hU)\|^2\\
	&=\|Q_\lambda U\|^2+\lambda|P_\sigma(U-I_hU)|^2\\
	&\le\|Q_\lambda U\|^2+\lambda\|U-I_hU\|_{L^2}^2\\
	&\le \lambda^{-1}|AU|^2+c_1\lambda h^2
		\big(\|U\|^2+h^2|AU|^2\big)\\
	&\le \lambda^{-1}|AU|^2+c_1\lambda h^2
		\big(\lambda_1^{-1}|AU|^2+h^2|AU|^2\big)\\
	&\le \big(\lambda^{-1}+c_1\lambda_1^{-1}\lambda
		h^2(1+\lambda_1 h^2)\big)|AU|^2\\
	&\le \lambda_1^{-1}\big(1+c_1 \lambda h^2(1+\lambda_1 h^2)\big)|AU|^2
	\le \lambda_1^{-1}(1+\varepsilon)|AU|^2,\\
\end{aligned}
\eeq
and finally $|AEU|$ as
\beq
\begin{aligned}
	|AEU|^2
	&=|Q_\lambda AU|^2+|AP_\lambda (U-P_\sigma  I_hU)|^2\\
	&\le |A U|^2+\lambda^2|P_\sigma(U-I_hU)|^2\\
	&\le |A U|^2+\lambda^2\|U-I_hU\|_{L^2}^2\\
	&\le |AU|^2+c_1\lambda^2 h^2
		\big(\|U\|^2+h^2|AU|^2\big)\\
	&\le \big(1+c_1\lambda_1^{-1}\lambda^2
		h^2(1+\lambda_1 h^2)\big)|AU|^2
	= (1+\varepsilon)|AU|^2.\\
\end{aligned}
\eeq
This completes the proof of the proposition.
\end{proof}

Our analysis will make use of {\it a priori\/} bounds on
the solution $U$ of \eqref{2dns}.
If $f\in H$ is time independent,
such bounds can be inferred from
bounds on the global attractor.
For example, Propositions~12.2 and 12.4 in Robinson \cite{Robinson2001}
may be stated as
\begin{proposition}\label{attbnd}
If $f\in H$ is time independent, then there are absorbing sets
in $H$, $V$ and $\DA$ of radiuses
$\rho_H$, $\rho_V$ and $\rho_A$, respectively,
depending only on $|f|$, $\Omega$ and $\nu$ such that
for every $U_0\in H$ there is a time $t_A$ depending only
on $|U_0|$ for which
\begin{align}\label{rhoA}
| U(t) |\le \rho_H,\quad
\| U(t) \|\le \rho_V
\words{and}
|AU(t)|\le \rho_A
\words{for all} t\ge t_A.
\end{align}
Moreover,
\begin{equation}\label{Aint}
\int_t^{t+\delta} |AU|^2 \le
	\Big({1\over \nu}+{\delta\lambda_1\over 2}\Big)
	\rho_V^2
		\words{for all} t\ge t_A.
\end{equation}
\end{proposition}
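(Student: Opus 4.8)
The plan is to recover the absorbing radii $\rho_H$, $\rho_V$ and $\rho_A$ from the classical energy and enstrophy estimates for the two-dimensional Navier--Stokes equations---precisely the content of Propositions~12.2 and~12.4 in Robinson \cite{Robinson2001}---and then to read off \eqref{Aint} as a byproduct of the enstrophy balance. I would not reprove the absorbing-set statements in full; it is enough to recall the mechanism. Testing \eqref{2dns} against $U$, using $(B(U,U),U)=0$ together with \eqref{poincareV} and Young's inequality, one obtains
\begin{equation}\label{Hgron}
  \frac{d}{dt}|U|^2 + \nu\lambda_1|U|^2 \le \frac{1}{\nu\lambda_1}|f|^2,
\end{equation}
so that by Gronwall $|U(t)|^2$ eventually enters and stays in the ball of radius $\rho_H$ with $\rho_H^2:=2\nu^{-2}\lambda_1^{-2}|f|^2$, with an entry time $t_A$ depending only on $|U_0|$ (and $|f|$, $\nu$, $\Omega$).

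Because the boundary conditions are periodic, the nonlinear term drops out of the enstrophy balance: testing \eqref{2dns} against $AU$---equivalently, multiplying the vorticity equation by the vorticity and using $\int_\Omega (U\cdot\nabla)\omega\,\omega\,dx=0$---gives $(B(U,U),AU)=0$, so
\begin{equation}\label{enst}
  \tfrac12\frac{d}{dt}\|U\|^2 + \nu|AU|^2 = (f,AU).
\end{equation}
Young's inequality on the right-hand side yields
\begin{equation}\label{enst2}
  \frac{d}{dt}\|U\|^2 + \nu|AU|^2 \le \frac{1}{\nu}|f|^2,
\end{equation}
and combining this with \eqref{poincareDA} shows, exactly as before, that $\|U(t)\|^2$ eventually enters and stays in the ball of radius $\rho_V$ with $\rho_V^2:=2\nu^{-2}\lambda_1^{-1}|f|^2$. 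For the $\DA$-bound one needs a further bootstrap: integrating \eqref{enst2} controls $\int_t^{t+1}|AU|^2$ in terms of $\rho_V$, and then one either carries out the $H^2$-energy estimate or, since $f$ is time independent, differentiates \eqref{2dns} in time and applies the uniform Gronwall lemma to $|dU/dt|^2$, obtaining $|AU(t)|\le\rho_A$ for $t$ large. This last step is the only genuinely technical point; it is precisely what Robinson's Proposition~12.4 records, and I would simply cite it.

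Finally, \eqref{Aint} is immediate from \eqref{enst2}: integrating over $[t,t+\delta]$ and discarding $\|U(t+\delta)\|^2\ge 0$ gives $\nu\int_t^{t+\delta}|AU|^2\le\|U(t)\|^2+\delta\nu^{-1}|f|^2\le\rho_V^2+\delta\nu^{-1}|f|^2$ for $t\ge t_A$; dividing by $\nu$ and using the identity $\nu^{-2}|f|^2=\tfrac12\lambda_1\rho_V^2$, which is exactly the value of $\rho_V$ dictated by the Gronwall estimate above, produces $\int_t^{t+\delta}|AU|^2\le(\nu^{-1}+\tfrac12\delta\lambda_1)\rho_V^2$. (A larger admissible choice of $\rho_V$ only loosens the estimate, since all that is needed is $\rho_V^2\ge 2\nu^{-2}\lambda_1^{-1}|f|^2$.) In short, the bound \eqref{Aint} is the easy part; the main obstacle is the $\DA$-absorbing set, whose proof requires the time-differentiated equation together with a uniform Gronwall argument.
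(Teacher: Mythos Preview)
Your proposal is correct and aligns with the paper's treatment: the paper does not prove Proposition~\ref{attbnd} in the main text but simply cites Propositions~12.2 and~12.4 of Robinson~\cite{Robinson2001}, exactly as you do, and your derivation of \eqref{Aint} from the enstrophy balance (including the identification $|f|^2=\tfrac12\lambda_1\nu^2\rho_V^2$) matches line for line the computation the paper carries out in Appendix~\ref{apriori} for the time-dependent case (Theorem~\ref{apriori1}).
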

\noindent
Similar bounds may be found in
Temam \cite{Temam1983} and
Constantin and Foias \cite{Constantin1988}.
The best estimate of $\rho_A$ to date appears in \cite{Foias2015}.
Before considering the case when $f$ depends on time, we further
note when $f\in V$ is time independent that
the bounds in (\ref{rhoA}) are finite for $t>t_0$.
Moreover, (\ref{Aint}) is finite and
\begin{equation}\label{vortpose}
	\int_t^{t+\delta} \|A U\|^2<\infty
\wwords{for all} t\ge t_0.
\end{equation}
We remark that estimate (\ref{vortpose}) follows as a particular case of the proof presented in Appendix \ref{apriori} for the time-dependent forcing term, see discussion below.

When $f\in L^{\infty}([t_0,\infty),H)$
depends on time, the resulting solution $U$ does not
automatically satisfy the $\rho_A$ bound in (\ref{rhoA})
nor the finiteness condition (\ref{vortpose}).
In the case of
type-I interpolant observables
the remaining bounds given by $\rho_H$ and $\rho_V$
are sufficient for our analysis.
However, for type-II interpolant observables
we need $\rho_A$ as well as the
finiteness condition (\ref{vortpose}).
These bounds may be obtained in a number of different ways.
For example, one could assume that
$f\in L^{\infty}([t_0,\infty),V)$ and
$df/dt\in L^{\infty}([t_0,\infty),V^*)$.
For details see Appendix \ref{apriori}.

Our analysis shall be made under the working
assumption that $\rho_H$, $\rho_V$ and $\rho_A$ are known when
needed and
that the unknown initial condition $U_0$ in \eqref{2dns}
comes from a long-time evolution prior to time $t_0$.
Thus, we assume $t_0\ge t_A$ and in particular that
the bounds \eqref{rhoA}, \eqref{Aint} and \eqref{vortpose}
hold, in fact, for $t\ge t_0$ regardless of whether $f$
depends on time or not.
For other initial conditions we further suppose that the norms
and time integrals appearing in all the above bounds are
at least finite when $t>t_0$.
We now state a standard result
concerning the finite-time continuous
dependence on initial conditions
for solutions
to the two-dimensional
incompressible Navier-Stokes equations.

\begin{theorem}\label{beta}
Under the working assumptions given above,
there exists $\beta>0$ depending only on {$|f|$, $L$ and
$\nu$} such that the free-running solution satisfies
\beq\label{wttn}
	|U(t)-S(t,t_*;{u_*})|^2 \le
		e^{\beta(t-t_*)}|U(t_*)-{u_*}|^2
\words{for} t\ge t_*\words{and}u_*\in V.
\eeq
\end{theorem}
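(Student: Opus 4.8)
The plan is to derive a differential inequality for the quantity $W(t) = U(t) - u(t)$, where $u(t) = S(t,t_*;u_*)$ is the free-running solution starting from $u_*$ at time $t_*$, and then apply Gr\"onwall's inequality. First I would subtract the two copies of \eqref{2dns}, one for $U$ and one for $u$, to obtain the evolution equation
\beq\label{Weq}
	{dW\over dt} + \nu AW + B(U,U) - B(u,u) = 0,
\eeq
then rewrite the nonlinear difference using bilinearity as $B(U,U) - B(u,u) = B(W,U) + B(u,W)$. Taking the $H$-inner product of \eqref{Weq} with $W$ and using the orthogonality property $\langle B(v,W),W\rangle = 0$ (valid for $v\in V$, $W\in V$ in the periodic case) kills the term $B(u,W)$, leaving
\beqs
	{1\over 2}{d\over dt}|W|^2 + \nu\|W\|^2 = -\langle B(W,U),W\rangle.
\eeqs

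The main obstacle is estimating the nonlinear term $\langle B(W,U),W\rangle$ in a way that produces a coefficient depending only on $|f|$, $L$ and $\nu$, as claimed, and not on higher norms of $U$. The standard route is to integrate by parts to move the derivative off $U$: using incompressibility, $\langle B(W,U),W\rangle = -\langle B(W,W),U\rangle$, which is bounded by $\|U\|_{L^\infty}$ times a quadratic expression in $W$. Agmon's inequality \eqref{agmon} then gives $\|U\|_{L^\infty}\le C|U|^{1/2}|AU|^{1/2}$, but this involves $|AU|$, which is not what we want. Instead one uses the two-dimensional Ladyzhenskaya inequality $\|W\|_{L^4}^2\le c|W|\,\|W\|$ to estimate $|\langle B(W,W),U\rangle|\le \|W\|_{L^4}^2\|\nabla U\|_{L^2}\le c|W|\,\|W\|\,\|U\|$, followed by Young's inequality to absorb $\nu\|W\|^2$:
\beqs
	c|W|\,\|W\|\,\|U\| \le \nu\|W\|^2 + {c^2\over 4\nu}|W|^2\|U\|^2.
\eeqs
This yields ${d\over dt}|W|^2 \le (c^2/2\nu)\|U\|^2 |W|^2$, and since the working assumptions give the absorbing-set bound $\|U(t)\|\le\rho_V$ with $\rho_V$ depending only on $|f|$, $\Omega$ and $\nu$, we may set $\beta = c^2\rho_V^2/(2\nu)$, which depends only on $|f|$, $L$ and $\nu$.

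Finally I would integrate this differential inequality from $t_*$ to $t$ via Gr\"onwall to conclude
\beqs
	|W(t)|^2 \le e^{\beta(t-t_*)}|W(t_*)|^2,
\eeqs
which is exactly \eqref{wttn}. One technical point worth addressing is the regularity needed to justify the inner product with $W$ and the integration by parts: for $u_*\in V$ the free-running solution $u(t)$ is regular enough for $t>t_*$ by the standard well-posedness theory recalled in the introduction, and a limiting argument (or working with the Galerkin approximations) handles the initial instant; this is routine and I would only remark on it. The use of the Ladyzhenskaya inequality rather than Agmon's is the crux, since it is precisely what keeps $\beta$ independent of $\rho_A$ and hence applicable under the weaker working assumptions relevant to type-I interpolants.
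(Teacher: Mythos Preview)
The paper does not actually prove Theorem~\ref{beta}; it is stated there as a ``standard result'' with only the remark that $\beta$ comes from the first Lyapunov exponent and is uniform in $u_*$. Your argument is precisely the standard proof one would supply: subtract the equations, use the orthogonality $(B(u,W),W)=0$, bound the surviving trilinear term via Ladyzhenskaya so that only $\|U\|\le\rho_V$ enters, apply Young and Gr\"onwall. The dependence $\beta=c^2\rho_V^2/(2\nu)$ you obtain is exactly of the form claimed.

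One small slip in the write-up: after invoking $\langle B(W,U),W\rangle=-\langle B(W,W),U\rangle$ you estimate the latter by $\|W\|_{L^4}^2\|\nabla U\|_{L^2}$, but $B(W,W)=(W\cdot\nabla)W$ carries the derivative on $W$, not on $U$. The bound you wrote is in fact the direct H\"older estimate on the \emph{original} term $\langle B(W,U),W\rangle=\int (W\cdot\nabla U)\cdot W$, so the integration by parts was unnecessary (or, equivalently, is just the paper's inequality~\eqref{Best} with $u=w=W$, $v=U$). Since the two trilinear forms agree up to sign, your final inequality $c|W|\,\|W\|\,\|U\|$ and everything downstream are unaffected; just clean up which term the H\"older/Ladyzhenskaya step is applied to.
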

\noindent
We remark that the above continuity result is obtained from the
first Lyapunov exponent, which reflects the instability in turbulent
flows.
Thus, the constant $\beta$ in Theorem \ref{beta}
is very large but uniform for $u_*\in V$.
The fact that $\beta$ does not depend on $u^*$ is a fact we
shall make salient use of in our subsequent analysis.

We recall that the bilinear term $B$ has the algebraic property that
\begin{equation}
    \big\langle B(u,v),w\big\rangle = -\big\langle B(u,w),v\big\rangle
    \label{balg}
\end{equation}
for $u,v,w\in V$, and consequently the orthogonality property that
\begin{equation}
    \big\langle B(u,w),w\big\rangle =0.
    \label{borth}
\end{equation}
Here the pairing $\langle \cdot,\cdot\rangle$ denotes the
dual action of $V^*$ on $V$.  Details may be found, e.g., in
\cite{Constantin1988}, \cite{Foias2001}, \cite{Robinson2001}
and \cite{Temam1983}.
In the case of periodic boundary conditions the bilinear term
possesses the additional orthogonality property
\begin{equation}
    \big(B(w,w),Aw\big)=0,
\qquad\hbox{for every}\quad w\in\DA;
    \label{balgp}
\end{equation}
and consequently one has 
\begin{equation}
    \big(B(u,w),Aw\big)
    +\big(B(w,u),Aw\big)
    =-\big(B(w,w),Au\big), \qquad\hbox{for every}\quad u, w\in \DA.
    \label{borthp}
\end{equation}

We further recall some well-known bounds on the non-linear term which
appear in \cite{Constantin1988}, \cite{Temam1983}, \cite{Temam1997} and
specifically as Proposition 9.2 in \cite{Robinson2001}.
\begin{proposition}
One has
\beq\label{Best0}
|(B(u,v),w)|\le \|u\|_{L^\infty}\|v\| |w|,
\eeq
where $u\in L^\infty, $ $v\in V$ and $w\in H$.
If $u, v, w \in V$ then
\beq\label{Best}
|(B(u,v),w)|\le c|u|^{1/2}\|u\|^{1/2}\|v\| |w|^{1/2}\|w\|^{1/2},
\eeq
and if $u\in V$ $v\in \DA$, and $w\in H$,
\beq\label{Best1}
|(B(u,v),w)|\le c|u|^{1/2}\|u\|^{1/2}\|v\|^{1/2}|Av|^{1/2} |w|.
\eeq
Here $c$ is an absolute non-dimensional constant.
\end{proposition}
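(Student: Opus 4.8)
The plan is to reduce all three inequalities to Hölder's inequality applied to the trilinear form $(B(u,v),w)=\int_\Omega (u\cdot\nabla v)\cdot w\,dx$, together with the two-dimensional Ladyzhenskaya interpolation inequality $\|\phi\|_{L^4}^2\le c\,|\phi|\,\|\phi\|$, valid for $L$-periodic $\phi$ with zero spatial average (a special case of Gagliardo--Nirenberg). Since $P_\sigma$ is self-adjoint and $w\in H$, one has $(B(u,v),w)=(P_\sigma(u\cdot\nabla v),w)=(u\cdot\nabla v,w)$, and in each of the three cases the Hölder estimate below shows this last integral to be absolutely convergent, so that it agrees with the $V^*$--$V$ pairing $\langle B(u,v),w\rangle$; alternatively, one first verifies each bound for $u,v,w\in{\cal V}$ and then passes to the limit, using that both sides are continuous in the relevant norms. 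Throughout I use the elementary Fourier identities $\|\nabla\phi\|_{L^2}=\|\phi\|$ and $\|D^2\phi\|_{L^2}=|A\phi|$, which hold here because $A=-\Delta$ in the periodic case and because of the normalisation \eqref{alphanorm}.

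For \eqref{Best0} apply Hölder with exponents $(\infty,2,2)$ to obtain $|(B(u,v),w)|\le\|u\|_{L^\infty}\,\|\nabla v\|_{L^2}\,|w|=\|u\|_{L^\infty}\,\|v\|\,|w|$. For \eqref{Best} apply Hölder with exponents $(4,2,4)$, which gives $|(B(u,v),w)|\le\|u\|_{L^4}\,\|\nabla v\|_{L^2}\,\|w\|_{L^4}=\|u\|_{L^4}\,\|v\|\,\|w\|_{L^4}$, and then bound both $\|u\|_{L^4}$ and $\|w\|_{L^4}$ by Ladyzhenskaya to arrive at $c\,|u|^{1/2}\|u\|^{1/2}\,|w|^{1/2}\|w\|^{1/2}$.

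For \eqref{Best1}, which is required only for $w\in H$, the key is to spend the regularity of $v$ rather than that of $w$: apply Hölder with exponents $(4,4,2)$ to get $|(B(u,v),w)|\le\|u\|_{L^4}\,\|\nabla v\|_{L^4}\,|w|$, estimate $\|u\|_{L^4}\le c\,|u|^{1/2}\|u\|^{1/2}$ by Ladyzhenskaya, and estimate $\|\nabla v\|_{L^4}$ by applying Ladyzhenskaya to the mean-zero field $\nabla v$, so that $\|\nabla v\|_{L^4}\le c\,\|\nabla v\|_{L^2}^{1/2}\|D^2 v\|_{L^2}^{1/2}=c\,\|v\|^{1/2}|Av|^{1/2}$. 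Multiplying these bounds produces \eqref{Best1}.

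There is no genuine obstacle here; the only points requiring any care are the choice, in \eqref{Best1}, to distribute the fourth powers onto $u$ and $\nabla v$ (so that $w$ need only lie in $H$), the routine verification of the periodic identities $\|\nabla\phi\|_{L^2}=\|\phi\|$ and $\|D^2\phi\|_{L^2}=|A\phi|$ via \eqref{alphanorm}, and the Ladyzhenskaya inequality itself, all of which are classical; see, e.g., \cite{Constantin1988} and \cite{Temam1983}.
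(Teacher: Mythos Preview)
The paper does not actually prove this proposition; it merely recalls these classical bounds and cites \cite{Constantin1988}, \cite{Temam1983}, \cite{Temam1997} and Proposition~9.2 of \cite{Robinson2001}. Your proof is correct and is precisely the standard argument found in those references: H\"older with exponents $(\infty,2,2)$, $(4,2,4)$ and $(4,4,2)$ respectively, combined with the two-dimensional Ladyzhenskaya inequality (applied to $\nabla v$ in the third case), so nothing further is needed.
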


\section{Type-I Interpolant Observables}
In this section we treat the case when $I_h$ is a type-I
interpolant observable.
While type-I interpolant observables are also of type II,
the bounds we obtain in treating these two cases separately
are sharper.
In addition, the proof for in the type-I case is simpler
and serves as a framework to help understand the
more complicated type-II case teated in the subsequent section.
From Definition \ref{algorithm} it follows that the
approximating solution $u$ satisfies
\beq\label{ns2}
	\frac{du}{dt}+\nu Au+B(u,u)=f
\wwords{for} t\in (t_n,t_{n+1}),
\eeq
where $u(t_n)=u_n$ is the initial condition given by
	$$u_0=JU_0\wwords{and} u_{n+1}=ES(t_{n+1},t_n;
u_n)+JU(t_{n+1}).$$
Note that $u_n\in\DA\subseteq V$, for $n=0,1,2,\dots$.
Consequently, the solution of \eqref{ns2}
with initial data $u(t_n)=u_n$
on the interval $(t_n, t_{n+1})$
is a strong solution of the Navier--Stokes equations.
Moreover, because of our working assumptions on $f$ we further
obtain that
$u(t)\in\DA$ for $t\in[t_n,t_{n+1}]$.
It follows that the estimates we make in the proof of
Proposition \ref{tildelemma} below, and in the results
which follow, are rigorous;
in particular, $v=U-u$ exists, is unique and $Av$ makes sense
at all times $t\ge t_0$.

The equations governing the evolution of $v$ may be written as
\beq\label{nse1}
	\frac{dv}{dt}+\nu Av+B(v,U)+B(U, v)+B(v,v)=0
\eeq
for $t\in(t_n, t_{n+1})$, with $v(t_n)=U(t_n)-u_n$, for $n=0,1,2,\dots.$.

\begin{proposition}\label{tildelemma}
Let $\tilde{v}_n=U(t_n)-S(t_n,t_{n-1};u_{n-1})$.
For every $\delta>0$ there are $\lambda$, large enough,
and $h$, small enough, for which there exists $\gamma\in (0,1)$
such that
$$
	\|\tilde{v}_{n+1}\|^2\le \gamma\|\tilde{v}_n\|^2,
	\wwords{for all} n=1,2,\ldots.
$$
\end{proposition}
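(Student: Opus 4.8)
The plan is to combine the spectral gain contained in Proposition~\ref{Ebounds} with a parabolic–smoothing estimate for the difference equation~\eqref{nse1}. The key preliminary observation is that, for $n\ge 1$,
$$
	v(t_n)=U(t_n)-u_n=EU(t_n)-ES(t_n,t_{n-1};u_{n-1})=E\tilde v_n,
$$
which follows at once from $u_n=ES(t_n,t_{n-1};u_{n-1})+JU(t_n)$ and $E=I-J$, while on the other hand $\tilde v_{n+1}=U(t_{n+1})-S(t_{n+1},t_n;u_n)=v(t_{n+1}^-)$, where $v$ solves~\eqref{nse1} on $(t_n,t_{n+1})$ with $v(t_n)=E\tilde v_n$. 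Hence the proposition reduces to proving a bound of the form $\|v(t_{n+1}^-)\|^2\le C_*(\delta)\,|v(t_n)|^2$, with $C_*(\delta)$ depending only on $\delta$, $\nu$, $\lambda_1$ and $\rho_V$ and, crucially, not on $\lambda$ or $h$. Granting this, the type-I estimate $|v(t_n)|^2=|E\tilde v_n|^2\le\lambda^{-1}(1+\varepsilon)\|\tilde v_n\|^2$ from Proposition~\ref{Ebounds} gives $\|\tilde v_{n+1}\|^2\le C_*(\delta)\lambda^{-1}(1+\varepsilon)\|\tilde v_n\|^2$; choosing $h$ so that $\varepsilon=c_1\lambda h^2\le 1$ and then $\lambda>2C_*(\delta)$ yields $\gamma:=C_*(\delta)\lambda^{-1}(1+\varepsilon)<1$, as required.

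To obtain the smoothing bound I would run two energy estimates for $v$ on $[t_n,t_{n+1}]$; since the working assumptions recalled above guarantee $v(t)\in\DA$ there, all the computations are rigorous. Testing~\eqref{nse1} against $v$, the orthogonality~\eqref{borth} annihilates $(B(U,v),v)$ and $(B(v,v),v)$, and~\eqref{Best} together with $\|U\|\le\rho_V$ gives $\frac{d}{dt}|v|^2+\nu\|v\|^2\le K|v|^2$ with $K=c^2\rho_V^2/\nu$; Gronwall then yields $\sup_{[t_n,t_{n+1}]}|v|^2\le e^{K\delta}|v(t_n)|^2$, and integrating gives $\nu\int_{t_n}^{t_{n+1}}\|v\|^2\le(1+K\delta e^{K\delta})|v(t_n)|^2$. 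Testing~\eqref{nse1} against $Av$ and using the periodic orthogonalities~\eqref{balgp} and~\eqref{borthp} reduces the nonlinear contribution to $(B(v,v),AU)$; bounding it with~\eqref{Best1} and the interpolation $\|v\|^2=(v,Av)\le|v|\,|Av|$ gives $|(B(v,v),AU)|\le c|v|\,|Av|\,|AU|$, so after a Young inequality $\frac{d}{dt}\|v\|^2\le (c^2/\nu)|v|^2|AU|^2$. Integrating this from $s$ to $t_{n+1}$, using~\eqref{Aint} to bound $\int_{t_n}^{t_{n+1}}|AU|^2$ by a $\rho_V$-dependent constant, and then averaging over $s\in[t_n,t_{n+1}]$ (using the control of $\int\|v\|^2$ just obtained) produces $\|v(t_{n+1}^-)\|^2\le C_*(\delta)|v(t_n)|^2$, with $C_*(\delta)$ of the announced form.

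The main obstacle is the $H^1$ estimate, and specifically arranging that the final bound on $\|v(t_{n+1}^-)\|^2$ is genuinely \emph{linear} in $|v(t_n)|^2$ with a time-integrable coefficient: estimating $(B(v,v),AU)$ -- or, equivalently, the pair $(B(v,U),Av)+(B(U,v),Av)$ -- too crudely leaves a sublinear power of $|v(t_n)|$ on the right-hand side, which would dominate for small $|v(t_n)|$ and destroy the contraction $\gamma<1$. Exploiting the periodic cancellation~\eqref{borthp} to replace those two terms by the single term $(B(v,v),AU)$ that is quadratic in $v$, and then using $\|v\|^2\le|v|\,|Av|$ so that the factors $\|v\|$ and $|Av|^{1/2}$ coalesce into $|Av|$, is what makes the argument close. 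One must also take care to invoke only the $\rho_V$-level a priori information -- in particular~\eqref{Aint} rather than the pointwise bound $|AU|\le\rho_A$ -- since the latter is not available in the type-I setting.
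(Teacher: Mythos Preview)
Your argument is correct, but it proceeds differently from the paper's proof. The paper never tries to control $\|v(t_{n+1}^-)\|^2$ purely by $|v(t_n)|^2$. Instead it multiplies \eqref{nse1} by $Av$, uses the same periodic cancellation~\eqref{borthp} and the same bound $|(B(v,v),AU)|\le c|v|\,|Av|\,|AU|$ as you do, but then applies the integrating factor $e^{\lambda_1\nu t}$ together with the continuous-dependence bound $|v(t)|^2\le e^{\beta\delta}|v_n|^2$ from Theorem~\ref{beta}. This produces
\[
\|v(t)\|^2\le e^{-\lambda_1\nu(t-t_n)}\|v_n\|^2+\big(\hbox{term}\propto|v_n|^2\big),
\]
after which \emph{both} type-I estimates of Proposition~\ref{Ebounds} are invoked: $\|v_n\|^2\le(1+\varepsilon)\|\tilde v_n\|^2$ for the first term and $|v_n|^2\le\lambda^{-1}(1+\varepsilon)\|\tilde v_n\|^2$ for the second. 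The resulting contraction constant is $\gamma=(1+\varepsilon)\big[e^{-\lambda_1\nu\delta}+O(\lambda^{-1})\big]$, made $<1$ by taking $\lambda$ large and then $h$ small.

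Your route, by contrast, derives a separate $L^2$ energy inequality and then uses the time-averaging smoothing trick $\|v(t_{n+1})\|^2\le\delta^{-1}\int_{t_n}^{t_{n+1}}\|v(s)\|^2\,ds+\cdots$ to pass from $|v(t_n)|^2$ directly to $\|v(t_{n+1}^-)\|^2$, so that only the $|EU|$ bound from Proposition~\ref{Ebounds} is needed. This is a perfectly valid alternative; it is arguably more self-contained (you rederive the $L^2$ Gronwall bound rather than citing Theorem~\ref{beta}) and shows that the second inequality $\|EU\|^2\le(1+\varepsilon)\|U\|^2$ is not actually essential. The paper's version, on the other hand, yields a more transparent $\gamma$ in which the dissipative decay $e^{-\lambda_1\nu\delta}$ appears explicitly, whereas your $C_*(\delta)$ carries a $1/(\nu\delta)$ factor from the averaging step. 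Both approaches require $\lambda$ to be chosen large as a function of $\delta$, with comparable asymptotics.
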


\begin{proof}
Multiplying \eqref{nse1} by $Av$ and then integrating over $\Omega$ we have
$$
	\frac 12\frac{d}{dt} \|v\|^2
		+\nu  |Av|^2+ (B(v,U), Av)+(B(U, v), Av)+(B(v,v),Av)=0.
$$
By \eqref{balgp} and \eqref{borthp},  we have
\beq\label{me1}
	\frac 12\frac{d}{dt} \|v\|^2+\nu  |Av|^2=(B(v,v), AU).
\eeq
Estimate the term on the right of the previous equation.
Using \eqref{Best1} and then the interpolation inequality $\|v\|\le |v|^{1/2}|Av|^{1/2}$ yields
\begin{align*}
|(B(v,v), AU)| &\le c|v|^{1/2}\|v\|^{1/2} \|v\|^{1/2} |Av|^{1/2}|AU|\\
	&= c|v|^{1/2}\|v\||Av|^{1/2}|AU|\\
	&\le c|v|^{1/2}|v|^{1/2}|Av|^{1/2}|Av|^{1/2}|AU|\\
	&= c|v||Av||AU|.
\end{align*}
Combining this with \eqref{me1}, we have
\beq\label{me2n}
	\frac 12\frac{d}{dt} \|v\|^2+\nu  |Av|^2\le  c|v||Av||AU|.
\eeq
Now, apply Young's inequality to obtain
\beq\label{BwwAu}
	\frac{d}{dt} \|v\|^2+\nu  |Av|^2
		\le \frac{c^2}{\nu}|v|^2|AU|^2.
\eeq
From Poincar\'e's inequality \eqref{poincareDA}
followed by \eqref{wttn}, we get
\beq\label{me4}
	\frac{d}{dt} \|v\|^2+\lambda_1\nu \|v\|^2
		\le \frac{c^2}{\nu}|AU|^2e^{\beta(t-t_n)}|v_n|^2
		\le \frac{c^2}{\nu}|AU|^2e^{\beta \delta}|v_n|^2,
\eeq
where we have assumed $t\in [t_n,t_{n+1})$.
Multiply equation \eqref{me4} by
$e^{\lambda_1\nu t}$
and
then integrate in time from $t_n$ to $t$.  Thus,
\begin{plain}\begin{equation}\label{me8}\eqalign{
	\|v(t)\|^2
	&\le e^{-\lambda_1\nu (t-t_n)}\|v_n\|^2
	+\frac{c^2}{\nu}
	e^{-\lambda_1\nu(t-t_n)+(\beta+\lambda_1\nu)\delta}
		|v_n|^2\int_{t_n}^t|AU(s)|^2ds
}\end{equation}\end{plain}%
for $t\in [t_n, t_{n+1})$.
Combining \eqref{me8} with the {\it a priori\/} estimate
\eqref{Aint}, we have
\beq\label{me8n}
\|v(t)\|^2\le e^{-\lambda_1\nu (t-t_n)}\|v_n\|^2
	+\frac{c^2\rho_V^2}{\nu}
	\left( \frac{1}{\nu}+{\delta \lambda_1\over 2}\right)
	e^{-\lambda_1\nu(t-t_n)+(\beta+\lambda_1\nu)\delta}
	|v_n|^2.
\eeq
Since $n\ge 1$ then
\begin{align*}
	v_n=U(t_n)-u_n&=U(t_n)-ES(t_n,t_{n-1};u_{n-1})-JU(t_n)\\
	&=E\big(U(t_n)-S(t_n,t_{n-1};u_{n-1})\big)
	=E(\tilde v_n),
\end{align*}
and by Proposition \ref{Ebounds},
we can estimate
\begin{align*}
	|v_n|^2
		\le \lambda^{-1}(1+\varepsilon)\|\tilde{v}_n\|^2
\wwords{and}
	\|v_n\|^2
	\le (1+\varepsilon)\|\tilde{v}_n\|^2.
\end{align*}
Hence \eqref{me8n} becomes
\beq\label{tbound}
\|v(t)\|^2\le
	(1+\varepsilon)
e^{-\lambda_1\nu (t-t_n)}
\left[1+\frac{c^2\rho_V^2}{\lambda\nu}
		\left( \frac{1}{\nu}+{\delta \lambda_1\over 2}\right)
	e^{(\beta+\lambda_1\nu)\delta}
		\right]\|\tilde{v}_n\|^2,
\eeq
for $t\in [t_n, t_{n+1})$.
Taking the limit as $t \nearrow t_{n+1}$ results in
$
	\|\tilde{v}_{n+1}\|^2 \le\gamma
		\|\tilde{v}_n\|^2,
$
where
$$	\gamma= (1+\varepsilon)
		\left[e^{-\lambda_1\nu \delta}+\frac{c^2\rho_V^2}{\lambda\nu}
	\left( \frac{1}{\nu}+{\delta \lambda_1\over 2}\right)
	e^{\beta\delta}
	\right].
$$
We now show for every $\delta>0$ that there exists $\lambda$
and $h$ such that $\gamma\in(0,1)$.  First, since
$$
	e^{-\lambda_1\nu\delta}<1\wwords{and}
	\frac{c^2\rho_V^2}{\lambda\nu}
		\left( \frac{1}{\nu}+{\delta \lambda_1\over 2}\right)
		e^{\beta \delta}
		\to 0
	\words{as}\lambda\to\infty,
$$
then there is $\lambda$ large enough such that
$$
		e^{-\lambda_1\nu \delta}+\frac{c^2\rho_V^2}{\lambda\nu}
	\left( \frac{1}{\nu}+{\delta \lambda_1\over 2}\right)
	e^{\beta\delta}
	<1.
$$
Finally, since $\varepsilon\to 0$, as $h\to 0$, while holding
$\lambda$ fixed,
then there is $h$ small enough such that $1+\varepsilon$
is small enough to ensure that $\gamma<1$.
\end{proof}
Observe that by a more careful analysis one could find explicit
choices for $\lambda$ and $h$ in terms of $\beta$, $\delta$,
$\lambda_1$, $\nu$ and $\rho_V$.
Note also that there is a dependency between $\lambda$ and $h$.
Since $h$ is a physical parameter related to the resolution
of the observations
while $\lambda$ is an easily-adjusted parameter
related to our spectral filter, it would be reasonable to further
choose $\lambda$ to minimize $h$.
The resulting estimate on $h$ could then be used to compare
the sharpness of the above theoretical bounds to alternative approaches
to the analysis, to numerical results obtained from simulation and to
similar analysis for different data assimilation schemes.
Such comparisons, while interesting, are outside the scope of
the present work.  We end this section with our main result on
type-I interpolant observables.

\begin{theorem}\label{mainI}
If $\delta$, $h$ and $\lambda$ are chosen appropriately as in Proposition \eqref{tildelemma},
then $\|U(t)-u(t)\|\to 0$, as $t\to\infty$.
Moreover, the rate of convergence is exponential in time.
\end{theorem}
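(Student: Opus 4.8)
The plan is to deduce Theorem~\ref{mainI} directly from the per-step contraction established in Proposition~\ref{tildelemma}, using the intermediate estimate \eqref{tbound}, which already bounds $\|v(t)\|$ on each subinterval $[t_n,t_{n+1})$ in terms of $\|\tilde v_n\|$. In other words, the heavy lifting is done in Proposition~\ref{tildelemma}; what remains is to combine a geometric sequence estimate with the subinterval bound and to rephrase the result in continuous time.

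First I would iterate the contraction: since $\|\tilde v_{n+1}\|^2\le\gamma\|\tilde v_n\|^2$ with a fixed $\gamma\in(0,1)$ for all $n\ge1$, one obtains $\|\tilde v_n\|^2\le\gamma^{\,n-1}\|\tilde v_1\|^2$ for every $n\ge1$. Here $\|\tilde v_1\|=\|U(t_1)-S(t_1,t_0;u_0)\|$ is finite: $u_0=JU_0\in\DA\subseteq V$, and under the working assumptions on $f$ both $S(t_1,t_0;u_0)$ and $U(t_1)$ lie in $\DA$. Hence $\|\tilde v_n\|\to0$ geometrically in $n$.

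Next, bounding $e^{-\lambda_1\nu(t-t_n)}\le1$ in \eqref{tbound} gives, for $t\in[t_n,t_{n+1})$ with $n\ge1$,
\beq
\|v(t)\|^2\le K\,\|\tilde v_n\|^2\le K\,\gamma^{\,n-1}\,\|\tilde v_1\|^2,
\qquad
K=(1+\varepsilon)\Bigl[1+\tfrac{c^2\rho_V^2}{\lambda\nu}\bigl(\tfrac1\nu+\tfrac{\delta\lambda_1}{2}\bigr)e^{(\beta+\lambda_1\nu)\delta}\Bigr].
\eeq
To turn this into exponential decay in $t$, observe that $t\in[t_n,t_{n+1})$ forces $n>(t-t_0)/\delta-1$, so that $\gamma^{\,n-1}\le\gamma^{(t-t_0)/\delta-2}=\gamma^{-2}\exp\!\bigl(\tfrac{\ln\gamma}{\delta}(t-t_0)\bigr)$, since $0<\gamma<1$. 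Substituting yields $\|v(t)\|^2\le K\gamma^{-2}\|\tilde v_1\|^2\exp\!\bigl(\tfrac{\ln\gamma}{\delta}(t-t_0)\bigr)$ for all $t\ge t_1$, which tends to $0$ exponentially because $\ln\gamma<0$. On the remaining segment $[t_0,t_1)$ the difference $v$ is $\DA$-valued and continuous, hence bounded, and this finite initial interval does not affect the limit. Thus $\|U(t)-u(t)\|\to0$ exponentially; combining with the Poincar\'e inequality \eqref{poincareV} then also gives $|U(t)-u(t)|\to0$ exponentially, recovering Theorem~\ref{mainresult} in the type-I case.

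The only step that warrants any care is this last passage from a discrete contraction to genuine exponential decay in continuous time, together with checking that the initial discrepancy $\|\tilde v_1\|$ is finite; both are handled by the elementary bookkeeping above, and no estimate beyond Proposition~\ref{tildelemma} and \eqref{tbound} is required.
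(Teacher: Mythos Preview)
Your proof is correct and follows essentially the same route as the paper: iterate the contraction from Proposition~\ref{tildelemma}, feed the result into \eqref{tbound} on each subinterval, and convert geometric decay in $n$ to exponential decay in $t$ via $n>(t-t_0)/\delta-1$. The only difference is that the paper explicitly bounds the base case by applying Proposition~\ref{Ebounds} to $v_0=EU_0$ and substituting into \eqref{me8n}, obtaining $\|\tilde v_1\|^2\le\gamma\rho_V^2$ and hence a final constant expressed in terms of the absorbing-ball radius $\rho_V$, whereas you simply invoke finiteness of $\|\tilde v_1\|$.
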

\begin{proof}
Choose $\delta$, $h$ and $\lambda$ as in Proposition \ref{tildelemma}.
In reference to equation \eqref{tbound}, let
$$
	M=(1+\varepsilon)
\left[1+\frac{c^2\rho_V^2}{\lambda\nu}
		\left( \frac{1}{\nu}+{\delta \lambda_1\over 2}\right)
	e^{(\beta+\lambda_1\nu)\delta}
		\right].
$$
We first bound $\tilde v_1$ in terms of $v_0$.
Since
$$
	v_0=U_0-u_0=U_0-JU_0=EU_0,
$$
then Proposition \ref{Ebounds} and the working
assumptions which follow Proposition \ref{attbnd} 
	yield that
$$
	|v_0|^2=|EU_0|^2 \le
		\lambda^{-1} (1+\varepsilon)\|U_0\|^2
		\le
		\lambda^{-1} (1+\varepsilon)\rho_V^2,
$$
and similarly that $\|v_0\|^2\le (1+\varepsilon)\rho_V^2$.
These two bounds substituted into \eqref{me8n}
for $n=0$ imply
\beq\label{me9n}
\|v(t)\|^2\le (1+\varepsilon)
e^{-\lambda_1\nu (t-t_0)}
		\left[1
	+\frac{c^2\rho_V^2}{\lambda\nu}
	\left( \frac{1}{\nu}+{\delta \lambda_1\over 2}\right)
	e^{(\beta+\lambda_1\nu)\delta}
	\right]
	\rho_V^2.
\eeq
for $t\in[t_0, t_1)$.
Taking the limit as
$t\nearrow t_1$ results in $\|\tilde v_1\|^2\le\gamma\rho_V^2$
where $\gamma\in(0,1)$.

Now, given $t>0$ choose $n$ such that $t\in [t_n,t_{n+1})$.
Since $n>(t-t_0)/\delta-1$,
it follows from (\ref{tbound}) that
$$\|U(t)-u(t)\|^2=\|v(t)\|^2\le M\|\tilde v_n\|^2
	\le M\gamma^{n} \rho_V^2
	\le M \gamma^{-1} \rho_V^2 e^{-\alpha (t-t_0)},
$$
where
$\alpha=\delta^{-1}\log(\gamma^{-1})$.
Note that $\gamma\in(0,1)$ implies $\alpha>0$.
It follows that $\|U(t)-u(t)\|$
converges to zero at an exponential rate.
\end{proof}

\section{Type-II Interpolant Observables}
In this section we treat the case when $I_h$ is a type-II
interpolant observable.
As before let $v=U-u$ where $U$ is the exact solution
to \eqref{2dns} about which we know only limited information
through the observables
and $u$ is the approximating process obtained by the
spectrally-filtered discrete data assimilation algorithm
given in Definition \ref{algorithm}.
The proof that the difference between $u$ and $U$ decays to zero
over time is complicated by the fact that the $|Av|$
norm enters into the bounds given by Proposition \ref{Ebounds}
and therefore needs to be controlled.
To do so, we shall employ an equation similar to \eqref{me1}
which governs the evolution of $|Av|^2$.
While such an equation could be obtained by formally multiplying
\eqref{nse1} by $A^2u$ and integrating over $\Omega$,
it is easier to work with the vorticity in two-dimensions.

Let $W=\curl U$, $w=\curl u$, and $g=\curl f$ where
$\curl$ has been defined such that
\begin{plain}$$
	\curl \Phi=
		{\partial \Phi_2(x_1,x_2)\over\partial x_1}
		-{\partial \Phi_1(x_1,x_2)\over\partial x_2}
\wwords{when}
	\Phi(x)=\big(\Phi_1(x_1,x_2),\Phi_2(x_1,x_2)\big).
$$\end{plain}%
Since $u$ is the approximating solution
described in Definition \ref{algorithm},
then $w$ is the resulting vorticity approximation of $W$.
Written in terms of vorticity, the corresponding version of Theorem
\ref{mainI} for type-II interpolant observables is given by
\begin{theorem}\label{mainII}
If $\delta$, $h$ and $\lambda$ are chosen appropriately,
then $\|W-w\|\to 0$, as $t\to\infty$.
Moreover, the rate of convergence is exponential in time.
\end{theorem}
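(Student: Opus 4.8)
The plan is to mimic the argument of Section 3 but carry it out at one level of regularity higher, using the vorticity formulation to control $|Av|$ rather than $\|v\|$. Writing $\omega = W - w = \curl v$, the key observation is that in two dimensions $\|\omega\|_{L^2} = |Av|$, $\|\nabla\omega\|_{L^2} = \|Av\|$, and the vorticity satisfies a transport-diffusion equation. The difference $\omega$ evolves, on each interval $(t_n,t_{n+1})$, according to
\beq\label{vortdiff}
	\frac{d\omega}{dt} + \nu A\omega + (u\cdot\nabla)\omega + (v\cdot\nabla)W = 0,
\eeq
where I am using that $(U\cdot\nabla)W = (u\cdot\nabla)\omega + (v\cdot\nabla)W$ after expanding $U = u + v$, $W = w + \omega$. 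The convective term $(u\cdot\nabla)\omega$ is orthogonal to $\omega$ in $L^2$ because $u$ is divergence free, which is the vorticity analogue of \eqref{balgp}; this is exactly what makes the vorticity formulation cleaner than multiplying \eqref{nse1} by $A^2 u$.

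The core estimate is then obtained by multiplying \eqref{vortdiff} by $\omega$ and integrating over $\Omega$. The $(u\cdot\nabla)\omega$ term drops out, leaving $\tfrac12\frac{d}{dt}\|\omega\|_{L^2}^2 + \nu\|\nabla\omega\|_{L^2}^2 = -\big((v\cdot\nabla)W,\omega\big)$. I would bound the right-hand side by passing back to velocity variables and using the nonlinear estimates of the preceding proposition; schematically, $|((v\cdot\nabla)W,\omega)|$ is controlled by a constant times $|v|^{1/2}|Av|^{1/2}\cdot|AU|\cdot|Av|^{1/2}\|Av\|^{1/2}$ or similar, and after Young's inequality one absorbs the $\|\nabla\omega\|_{L^2}^2 = \|Av\|^2$ contribution into the $\nu$-dissipation term. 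This produces a differential inequality of the form
\beq\label{vortineq}
	\frac{d}{dt}|Av|^2 + \lambda_1\nu|Av|^2 \le \frac{C}{\nu^3}\,\big(|AU|^4 + \cdots\big)\,|v|^2,
\eeq
where the crucial feature, exactly as in \eqref{me4}, is that the right side carries a factor $|v|^2$ of the \emph{low} norm, which is small because $v_n = E(\tilde v_n)$ and Proposition \ref{Ebounds} gives $|v_n|^2 \le (\lambda\lambda_1)^{-1}(1+\varepsilon)|A\tilde v_n|^2$. One would also need a companion bound on $|v(t)|$ itself over the interval — obtained from Theorem \ref{beta} together with the $|Av|$-control — and an integrated-in-time bound on $|AU|^4$ or $\|AU\|^2$, which is where the finiteness condition \eqref{vortpose} from the working assumptions enters; this is the one genuinely new ingredient compared to Section 3.

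Integrating \eqref{vortineq} with the integrating factor $e^{\lambda_1\nu t}$ from $t_n$ to $t$, substituting $|v_n|^2 \le (\lambda\lambda_1)^{-1}(1+\varepsilon)|A\tilde v_n|^2$, and taking $t\nearrow t_{n+1}$ yields
\beq\label{vortgamma}
	|A\tilde v_{n+1}|^2 \le \gamma\,|A\tilde v_n|^2, \qquad
	\gamma = (1+\varepsilon)\Big[e^{-\lambda_1\nu\delta} + \frac{K}{\lambda}e^{\beta\delta}\Big],
\eeq
for a constant $K$ depending only on $\nu$, $\lambda_1$, $\delta$ and the a priori bounds $\rho_V,\rho_A$; here $\varepsilon = c_1\lambda_1^{-1}\lambda^2 h^2(1+\lambda_1 h^2)$ as in Proposition \ref{Ebounds} for the type-II case. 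As in Proposition \ref{tildelemma}, for any $\delta>0$ one first picks $\lambda$ large enough that the bracket is strictly below $1$ — here one must check that $K/\lambda \to 0$, i.e.\ that the $\lambda$-dependence of the nonlinear constant is genuinely sublinear, which it is since the only $\lambda$-growth came from the reverse Poincar\'e inequality \eqref{preverse} applied once — and then, with $\lambda$ fixed, picks $h$ small enough that $(1+\varepsilon)$ times that bracket is still below $1$. This gives $\gamma\in(0,1)$, hence $|A\tilde v_n|^2\to 0$ geometrically, and then, exactly as in the proof of Theorem \ref{mainI}, an interval estimate of the form $|Av(t)|^2\le M|A\tilde v_n|^2$ for $t\in[t_n,t_{n+1})$ converts the geometric decay of $|A\tilde v_n|$ into exponential-in-time decay of $|Av(t)| = \|\omega(t)\|_{L^2} = \|W(t)-w(t)\|$.

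**Main obstacle.** The essential difficulty, and the reason type-II is harder than type-I, is the bookkeeping around the $|Av|$ norm: Proposition \ref{Ebounds} forces every estimate to live at the $\DA$ level, so one must control $\int_{t_n}^{t}\|AU\|^2$ or $\int_{t_n}^t|AU|^4$ along the trajectory — precisely the quantity \eqref{vortpose} that is not automatic for time-dependent forcing and requires the extra hypotheses on $f$ from Appendix \ref{apriori}. A secondary subtlety is ensuring that the nonlinear constant $K$ in \eqref{vortgamma} grows no faster than linearly in $\lambda$ after all the Poincar\'e/reverse-Poincar\'e and Agmon interpolations are carried out, since otherwise the term $K\lambda^{-1}e^{\beta\delta}$ would not vanish as $\lambda\to\infty$ and the scheme for choosing $\lambda$, then $h$, would break down.
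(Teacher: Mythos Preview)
Your proposal has a foundational error in the norm identifications. With $\omega=\curl v$ divergence-free and mean-zero in two dimensions, one has
\[
\|\omega\|_{L^2}=\|v\|\qquad\text{and}\qquad \|\nabla\omega\|_{L^2}=|Av|,
\]
not $\|\omega\|_{L^2}=|Av|$ and $\|\nabla\omega\|_{L^2}=\|Av\|$ as you write; compare the Fourier computation preceding equation \eqref{zetaeq} in the paper. Consequently, multiplying \eqref{vortdiff} by $\omega$ and integrating yields exactly the differential inequality \eqref{me1} for $\|v\|^2$ that already drives the type-I argument. It does \emph{not} give an inequality for $|Av|^2$, and it therefore cannot establish $\|W-w\|=|Av|\to 0$. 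Moreover, the induction cannot close: for type-II interpolants Proposition \ref{Ebounds} bounds $\|v_n\|^2=\|E\tilde v_n\|^2$ only by $\lambda_1^{-1}(1+\varepsilon)|A\tilde v_n|^2$, so your Gronwall step would give $\|\tilde v_{n+1}\|^2\le\gamma'|A\tilde v_n|^2$, which is the wrong norm on the left to iterate.

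The paper instead multiplies the vorticity difference equation by $A\xi$ (your $-\Delta\omega$), which is the correct test function to obtain an evolution inequality for $\|\xi\|^2=|Av|^2$. The price is that the orthogonality you rely on is lost: none of the three nonlinear terms $(B(v,W),A\xi)$, $(B(v,\xi),A\xi)$, $(B(U,\xi),A\xi)$ vanish. They are bounded separately in Lemma \ref{lemmaEyes}, and the middle term contributes a factor $\|\xi\|^3|v|^2$ on the right of the resulting inequality \eqref{es3}. Because this is cubic in the quantity being estimated, a direct Gronwall does not suffice; the paper runs a continuity (bootstrap) argument, showing by contradiction that $\|\xi(t)\|<2\rho_A$ persists on each interval $[t_n,t_{n+1})$, so that $\|\xi\|^3$ can be absorbed into the constant $8\rho_A^3+K$ appearing in $\gamma$. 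This bootstrap, together with an induction on $n$, is the genuinely new ingredient relative to Section 3, and it is absent from your outline. Your identified ``main obstacle'' concerning $\int\|AU\|^2$ is a side issue: the paper uses only the pointwise bound $\rho_A$ in the contraction estimate, with \eqref{vortpose} needed merely to justify that the formal manipulations are rigorous.
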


Before proving Theorem \ref{mainII} we fix our notation
by stating a few facts
about the vorticity and proving a lemma containing
bounds for non-linear terms that will be used later.
First note, after taking the $\curl$ of \eqref{navierstokes},
that Definition \ref{algorithm} implies $W$ and $w$ satisfy
\beq\label{nset}
	\frac{\partial W}{\partial t}-\nu \Delta W+(U\cdot\nabla)W=g
\wwords{and}
	\frac{\partial w}{\partial t}-\nu \Delta w+(u\cdot\nabla)w=g
\eeq
on each interval $(t_n,t_{n+1})$.  Our working assumptions
in the case of type-II interpolant observables ensure that
the equations (\ref{nset}) hold in the strong sense.
In particular,
$W=\curl U$ and $w=\curl u$ exist and
$|\Delta W|=|A^{3/2} U|$ and
$|\Delta w|=|A^{3/2} u|$
are finite almost everywhere.
Therefore, the equations governing the evolution through
the vorticity of the difference $\xi=W-w$ may be written as
\beq\label{nset2}
	\frac{\partial\xi}{\partial t}-\nu \Delta \xi+(v\cdot\nabla)W
		+(v\cdot\nabla)\xi+(U\cdot\nabla)\xi=0,
\eeq
where $\xi(t_n)=W(t_n)-\curl u_n$ and $v=\curl^{-1}\xi$.

Since $v$ is divergence-free with zero average, then
$\curl^{-1}\xi$ is well defined and may be written in terms of
Fourier series as
\begin{plain}$$
	\curl^{-1}\xi=
	\sum_{k\in\J} {i(k_2,-k_1)\over |k|^2} \widehat\xi_k e^{ik\cdot x}
\wwords{when}
	\xi=\sum_{k\in\J} \widehat\xi_k e^{ik\cdot x}.
$$\end{plain}%
Recall that the divergence-free condition $k\cdot\widehat v_k=0$ implies
\begin{plain}$$\eqalign{
	|\widehat\xi_k|^2&=
		|ik_1\widehat v_{k,2}-ik_2\widehat v_{k,1}|^2
		=
		k_1^2 |\widehat v_{k,2}|^2+k_2^2 |\widehat v_{k,1}|^2
		- k_1k_2 \widehat v_{k,1}\widehat v_{k,2}^*
		- k_1k_2 \widehat v_{k,1}^*\widehat v_{k,2}\cr
		&=
		k_1^2 |\widehat v_{k,2}|^2+k_2^2 |\widehat v_{k,1}|^2
		+ k_2^2 |\widehat v_{k,2}|^2
		+ k_1^2 |\widehat v_{k,1}|^2
		= |k|^2 |\widehat v_k|^2.
}$$\end{plain}%
Therefore
$$
	|\xi|^2 = L^2\sum_{k\in\J} |\widehat \xi_k|^2=\|v\|^2
\wwords{and}
	\|\xi\|^2 =L^2\sum_{k\in\J} |k|^2|\widehat \xi_k|^2 = |Av|^2.
$$

To keep the notation in the present section similar to the notation
appearing in the previous section, we abuse it by extending the
definitions of $B$ and $A$ to the vorticity as
$$
	B(v,\xi)=(v\cdot\nabla)\xi
\wwords{and}
	A\xi=-\Delta\xi.
$$
Thus, equation \eqref{nset2} may be written as
\begin{equation}\label{zetaeq}
	\frac{d\xi}{dt}+\nu A \xi+B(v,W)
		+B(v,\xi)+B(U,\xi)=0.
\end{equation}
Equations \eqref{zetaeq} are similar to \eqref{nse1} in structure;
however, there are no cancellations when multiplying by $A\xi$
and integrating over $\Omega$.  To bound the resulting terms
we prove
\begin{lemma}\label{lemmaEyes}
Let $U$, $W$, $v$ and $\xi$ be defined as above.
The following bounds hold
\begin{plain}$$\eqalignno{
	|(B(v, W), A\xi)|
	&\le C\frac{4^2}{3\nu^2}|v|^2\|W\|^3+\frac{\nu}{6}|A\xi|^2,\cr
	|(B(v,\xi), A\xi)|
    &\le C\frac{4^2}{3\nu^2}|v|^{2} \|\xi\|^3
        + \frac{\nu}{6}|A\xi|^{2},\cr
\noalign{\medskip\noindent and\medskip}
	|(B(U, \xi), A\xi)|
	&\le C\frac{5^5}{\nu^5}\|U\|^6_{L^\infty}|v|^{2}
        +\frac{\nu}{6}|A\xi|^2
}$$\end{plain}%
for almost every $t\ge t_0$.
\end{lemma}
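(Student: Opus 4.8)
The plan is to derive all three bounds by one three-step scheme: an elementary Hölder estimate to expose a single factor of $|A\xi|$, then Agmon's inequality \eqref{agmon} together with an interpolation inequality among the $\xi$-norms to rewrite the remaining factors, and finally Young's inequality to split off a portion $\frac{\nu}{6}|A\xi|^2$ of the dissipation (so that the three error terms together consume only half of the $\nu|A\xi|^2$ produced when \eqref{zetaeq} is multiplied by $A\xi$ and integrated). For the first step I would use, for each trilinear term $(B(a,b),A\xi)$ occurring here, the pointwise bound $|(a\cdot\nabla)b|\,|A\xi|\le|a|\,|\nabla b|\,|A\xi|$ followed by Hölder with exponents $\infty$, $2$, $2$, which gives $|(B(a,b),A\xi)|\le\|a\|_{L^\infty}\|\nabla b\|_{L^2}\,|A\xi|=\|a\|_{L^\infty}\|b\|\,|A\xi|$ (this is the scalar-vorticity analogue of \eqref{Best0}; the last equality uses $\|\nabla b\|_{L^2}=\|b\|$ since $W$ and $\xi$ have zero spatial average). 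Throughout I will use the identities $|\xi|=\|v\|$, $\|\xi\|=|Av|$ and $|A\xi|=|A^{3/2}v|$ established just above — which make interpolation among these quantities legitimate — and the fact, guaranteed by the working assumptions recorded before \eqref{nset}, that all of them are finite for almost every $t\ge t_0$.

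For $(B(v,W),A\xi)$ and $(B(v,\xi),A\xi)$, which are handled identically with $W$ and $\xi$ in interchangeable roles, I start from $\|v\|_{L^\infty}\|W\|\,|A\xi|$ and estimate $\|v\|_{L^\infty}$ by first applying Agmon, $\|v\|_{L^\infty}\le C|v|^{1/2}|Av|^{1/2}$, and then the interpolation inequality $|Av|\le c|v|^{1/3}|A^{3/2}v|^{2/3}$ (which in the $\xi$ notation reads $\|\xi\|\le c|v|^{1/3}|A\xi|^{2/3}$ and follows from $\|v\|_2\le\|v\|_0^{1/3}\|v\|_3^{2/3}$), obtaining $\|v\|_{L^\infty}\le c|v|^{2/3}|A\xi|^{1/3}$ and hence $|(B(v,W),A\xi)|\le c|v|^{2/3}\|W\|\,|A\xi|^{4/3}$. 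Young's inequality with conjugate exponents $3$ and $3/2$, calibrated so that the $|A\xi|^2$ term carries coefficient $\frac{\nu}{6}$, then leaves the companion term $\frac{4^2}{3\nu^2}$ times $|v|^2\|W\|^3$ (respectively $|v|^2\|\xi\|^3$), with the absolute constant $C$ of the statement absorbing the cube of the Agmon and interpolation constants.

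For $(B(U,\xi),A\xi)$ I leave $U$ untouched: starting from $\|U\|_{L^\infty}\|\xi\|\,|A\xi|$ and using the same interpolation $\|\xi\|\le c|v|^{1/3}|A\xi|^{2/3}$ gives $|(B(U,\xi),A\xi)|\le c\|U\|_{L^\infty}|v|^{1/3}|A\xi|^{5/3}$, and Young's inequality with conjugate exponents $6$ and $6/5$, again tuned to leave $\frac{\nu}{6}|A\xi|^2$, produces the constant $\frac{5^5}{\nu^5}$ in front of $\|U\|_{L^\infty}^6|v|^2$. The large powers of $\|U\|_{L^\infty}$ and $\nu^{-1}$ are exactly the price of this very unbalanced Young splitting, forced on us because $\|\xi\|$ contributes only $|A\xi|^{2/3}$ toward the dissipation.

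None of these steps is delicate; the one point that requires care is to interpolate $\|\xi\|$ against $|v|$ and $|A\xi|$ via the $H^{-1}$--$H^2$ inequality $\|\xi\|\le c|v|^{1/3}|A\xi|^{2/3}$ rather than the naive $\|\xi\|\le|\xi|^{1/2}|A\xi|^{1/2}$. It is this choice that puts $|v|^2$ — the $L^2$ norm of $v$, which the subsequent analysis controls through Proposition \ref{Ebounds} and Theorem \ref{beta} — on the right-hand sides instead of $\|v\|^2$, and it fixes the stated powers of $\nu$; making the bookkeeping of exponents land precisely on $|A\xi|^2$ with the advertised constants is then the only remaining thing to watch.
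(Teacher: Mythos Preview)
Your proposal is correct and follows essentially the same approach as the paper: H\"older in the form \eqref{Best0}, Agmon on $v$, the interpolation $|Av|\le|v|^{1/3}|A\xi|^{2/3}$ (equivalently $\|v\|_2\le\|v\|_0^{1/3}\|v\|_3^{2/3}$), and then Young with exponents $3,3/2$ for $I_1,I_2$ and $6,6/5$ for $I_3$. The only cosmetic difference is that the paper introduces an auxiliary exponent~$\theta$, writes $|Av|^{1/2}=|Av|^{1/2-\theta}|Av|^{\theta}$, interpolates only the $|Av|^{\theta}$ factor, applies Young, and then discovers $\theta=1/2$ is forced; you go straight to the endpoint, which is cleaner and lands on the same estimate $|(B(v,W),A\xi)|\le C|v|^{2/3}\|W\|\,|A\xi|^{4/3}$.
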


\begin{proof}
The condition (\ref{vortpose}) applied to both $U$
and $u$ implies that $\|A v\|=|A\xi|$ is finite for almost every
$t\ge t_0$.  Our working assumptions further imply that the other norms
appearing in the above bounds
exist everywhere.
For convenience denote
\begin{equation*}
	I_1=|(B(v, W), A\xi)|,\qquad
	I_2=|(B(v,\xi), A\xi)|\wwords{and}
	I_3=|(B(U, \xi), A\xi)|.
\end{equation*}
We now estimate $I_1$, $I_2$ and $I_3$ in turn.
First, estimate $I_1$ using \eqref{Best0}
followed by Agmon's inequality to obtain
\begin{align*}
	I_1\le\|v\|_{L^\infty} \|W\| |A\xi|
	\le C|v|^{1/2}|Av|^{1/2}\|W\| |A\xi|
	=C|v|^{1/2}|Av|^{1/2-\theta}|Av|^{\theta}\|W\| |A\xi|.
\end{align*}
Since $|Av|=\|\xi\|$, we have
\begin{align*}
	I_1\le C|v|^{1/2}\|\xi\|^{1/2-\theta}|Av|^{\theta}\|W\| |A\xi|.
\end{align*}
We now use interpolation inequality on $|Av|^{\theta}$
and have $|Av|^{\theta}\le |v|^{\theta/3}|A\xi|^{2\theta/3}$.
This yields
\begin{align*}
	I_1\le C|v|^{1/2+\theta/3}\|\xi\|^{1/2-\theta}
		|A\xi|^{1+2\theta/3}\|W\|.
\end{align*}
Using Young's inequality with powers $3$ and $3/2$, we have
\begin{align*}
	I_1\le C\frac{16}{3\nu^2}|v|^{3/2+\theta}\|\xi\|^{3/2-3\theta}
		\|W\|^3+\frac{\nu}{6}|A\xi|^{3/2+\theta}.
\end{align*}
Choose $\theta=\frac{1}{2}$, then we have
\beq\label{estIone}
	I_1\le C\frac{16}{3\nu^2}|v|^2\|W\|^3+\frac{\nu}{6}|A\xi|^2.
\eeq
Next, estimate $I_2$ using \eqref{Best0}
and then Agmon's inequality.  We have
\begin{align*}
	I_2\le C|v|^{1/2} |Av|^{1/2} \|\xi\| |A\xi|
		=C|v|^{1/2} |Av|^{3/2-\theta} |Av|^{\theta}|A\xi|.
\end{align*}
Using interpolation on $|Av|^{\theta}$ it follows that
\begin{align*}
	I_2\le C|v|^{1/2+\theta/3} |Av|^{3/2-\theta} |A\xi|^{1+2\theta/3}.
\end{align*}
Choosing $\theta=\frac 12$ and then by Young's inequality
with powers $3$ and $3/2$, we have
\beq\label{estItwo}
	I_2\le C|v|^{4/6} |Av| |A\xi|^{4/3}
		\le C\frac{4^2}{3\nu^2}|v|^{2} \|\xi\|^3
		+ \frac{\nu}{6}|A\xi|^{2}.
\eeq
Finally, estimate $I_3$ using \eqref{Best0}.  We have
\begin{align*}
	I_3&\le \|U\|_{L^\infty}\| \xi\| |A\xi|
		=\|U\|_{L^\infty}| Av| |A\xi|\\
	& \le \|U\|_{L^\infty}|v|^{1/3}|A\xi|^{5/3}.
\end{align*}
Using Young's inequality with powers $6$ and $6/5$ it follows that
\beq\label{estIthree}
	I_3\le C\frac{5^5}{\nu^5}\|U\|^6_{L^\infty}|v|^{2}
		+\frac{\nu}{6}|A\xi|^2.
\eeq
\end{proof}

\begin{proof}[Proof of Theorem \ref{mainII}]
Multiplying equations \eqref{zetaeq} by $A\xi$
and integrating over $\Omega$ yields
\begin{equation}
\label{es1}
	\frac 12\frac{d}{dt}\|\xi\|^2+\nu |A\xi|^2
		+(B(v, W), A\xi)+(B(v,\xi), A\xi)+(B(U, \xi), A\xi)=0.
\end{equation}
We remark that the working assumptions for type-II interpolant
observables imply both
$U$ and $u$ and consequently their difference has the needed
regularity for
the above equation to make sense.  These assumptions further provide
{\it a priori\/} bounds on $U$ which are uniform in time.
Although
the corresponding norms of $u$ are finite, we cannot at this point
assume they are uniformly bounded in time.
Under the hypotheses of this theorem, however,
uniform bounds on $u$
can be inferred
as a consequence of this proof

Now, plug the estimates given by Lemma \ref{lemmaEyes} into
\eqref{es1} to obtain
\beq\label{es2}
	\frac{d}{dt}\|\xi\|^2+ \nu|A\xi|^2
	\le C\left(\frac{1}{\nu^2}\|W\|^3
		+\frac{1}{\nu^2}\|\xi\|^3
		+ \frac{1}{\nu^5}\|U\|^6_{L^\infty}\right)|v|^{2}.
\eeq
We again point out that $C$ is a non-dimensional constant independent
of $\delta$, $\lambda$ and $h$.
By Poincar\'e's inequality \eqref{poincareDA} we have
\begin{plain}\beq\label{es3}
	\frac{d}{dt}\|\xi\|^2+ \lambda_1\nu\|\xi\|^2\le
	{C\over \nu^2}\left(\|\xi\|^3+K\right)|v|^{2},
\eeq\end{plain}%
where the assumption $\|W\|\le\rho_A$ combined
with Agmon's inequality \eqref{agmon} allows us to take
$$K=\rho_A^3(1+ c \nu^{-3}\rho_H^3).$$

Alternatively, one could write $K'=\rho_A(1+ c \nu^{-3}\rho_H^3)$
to obtain
\begin{plain}$$
	\frac{d}{dt}\|\xi\|^2+ \lambda_1\nu\|\xi\|^2\le
	{C\over \nu^2}\left(\|\xi\|^3+K'|AU|^2\right)|v|^{2},
$$\end{plain}%
and then estimate the integral of $|AU|^2$ using \eqref{Aint}
as we did in \eqref{me8n}.
Unfortunately, this improvement is dominated by
subsequent estimates on $\|\xi\|$ which are
proportional to $\rho_A$.
Therefore, as the differences are minimal,
we continue with \eqref{es3} for simplicity.

By \eqref{wttn}, we have
\begin{plain}\begin{align}
\label{es4}
	\frac{d}{dt}\|\xi\|^2&+ \lambda_1\nu\|\xi\|^2
	\le {C\over\nu^2}\left(\|\xi\|^3 +K\right)
		e^{\beta(t-t_n)}|v_n|^2.
\end{align}\end{plain}%
Note that equation \eqref{es4} is
similar to \eqref{me4} except for the additional term involving
$\|\xi\|^3$ on the right.
Fortunately, this term can be controlled for times of size
$\delta$ by our choosing~$h$ small and $\lambda$ large.
This complicates the proof and is the main reason why
the type-I interpolant observables were treated separately
in the previous section.

Continue as in the type-I case.  First,
multiply \eqref{es4} by $e^{\lambda_1\nu t}$,
integrate from $t_n$ to $t$
and simplify as in \eqref{me8} to obtain
\begin{align*}
\label{es5}
	\|\xi\|^2&\le\|\xi_n\|^2e^{-\lambda_1\nu (t-t_n)}
	+
	\frac {C}{\nu^2 \beta}\Big(\sup_{s\in[t_n,t)}\|\xi(s)\|^3
		+K\Big)e^{\beta(t-t_n)}|v_n|^2.
\end{align*}
When $n=0$ it follows from Proposition \ref{Ebounds} that
$$
	|v_0|^2=|E U_0|^2
	\le (\lambda\lambda_1)^{-1}(1+\varepsilon) |AU_0|^2
	\le (\lambda\lambda_1)^{-1}(1+\varepsilon) \rho_A^2,
$$
and
$$
	\|\xi_0\|^2=|A v_0|^2
		=|A EU_0|^2
		\le (1+\varepsilon) |AU_0|^2
		\le (1+\varepsilon) \rho_A^2.
$$
Therefore when $t\in[t_0,t_1)$ we have
\begin{equation}\label{zetaineq}
	\|\xi\|^2\le
	(1+\varepsilon)\bigg\{
	e^{-\lambda_1\nu (t-t_0)}+
	\frac C{\lambda\lambda_1\nu^2\beta}
		\Big(\sup_{s\in[t_0,t)}\|\xi(s)\|^3
		+K\Big)e^{\beta(t-t_0)}\bigg\}
	\rho_A^2.
\end{equation}
Let $\delta>0$ be arbitrary and define
$$
	\gamma=
	(1+\varepsilon)\bigg\{
	e^{-\lambda_1\nu \delta}+
	\frac C{\lambda\lambda_1\nu^2\beta}
		\Big(8\rho_A^3
		+K\Big)e^{\beta \delta}\bigg\}.
$$
As in the the proof of Proposition \ref{tildelemma}, since
$$
	e^{-\lambda_1\nu\delta}<1\wwords{and}
	\frac C{\lambda\lambda_1\nu^2\beta}
		\Big(8\rho_A^3
		+K\Big)e^{\beta \delta}\to 0\words{as}\lambda\to\infty,
$$
then there is $\lambda$ large enough such that
\begin{equation}\label{boundtwo}
	e^{-\lambda_1\nu\delta}+
	\frac C{\lambda\lambda_1\nu^2\beta}
		\Big(8\rho_A^3
		+K\Big)e^{\beta \delta}
	<1.
\end{equation}
Furthermore, since $\varepsilon\to0$ as $h\to0$
while holding $\lambda$ fixed,
then there is $h$ small enough such that $1+\varepsilon<2$
and moreover small enough to ensure that $\gamma<1$.

For the choice of $\delta$, $h$ and $\lambda$ given above, let
$$
	M= \sup_{s\in [0,\delta]}(1+\varepsilon)\bigg\{
	e^{-\lambda_1\nu s}+
	\frac C{\lambda\lambda_1\nu^2\beta}
		\Big(8\rho_A^3
		+K\Big)e^{\beta s}\bigg\},
$$
and note \eqref{boundtwo} along with the
fact that $1+\varepsilon<2$ implies $M<4$.
We claim that $\|\xi(s)\|< 2\rho_A$
for $s\in[t_0,t_1)$.
For contradiction, suppose not.
Since $\|\xi\|$ is continuous on $[t_0,t_1)$ and
$$
	\|\xi(t_0)\|=\|\xi_0\|\le (1+\varepsilon)^{1/2}\rho_A <
		2^{1/2}\rho_A<2\rho_A,
$$
then this would imply the existence of $t_*\in (t_0,t_1)$
such that
$$\|\xi(t_*)\|=2\rho_A\wwords{and}
  \|\xi(s)\|<2\rho_A\words{for} s\in [t_0,t_*).$$
However, if this were true, then inequality \eqref{zetaineq} would imply
$$
\|\xi(t_*)\|^2\le
	(1+\varepsilon)\bigg\{
	e^{-\lambda_1\nu(t_*-t_0)} +
	\frac C{\lambda\lambda_1\nu^2\beta}
		\Big(8\rho_A^3
		+K\Big)e^{\beta (t_*-t_0)}\bigg\}\rho_A^2
	\le M\rho_A^2 < 4\rho_A^2,
$$
which is a contradiction.
Therefore $\|\xi(s)\|< 2\rho_A$ for $s\in[t_0,t_1)$.
Consequently
$$
	\sup_{s\in [t_0,t_1)}\|\xi(s)\|^3 \le 8\rho_A^3,
$$
and taking the limit of \eqref{zetaineq} as $t\to t_1$ results in
	$\|\tilde \xi_1\|^2\le\gamma \rho_A^2$.

We proceed by induction.
Let $n\ge 1$ and suppose
$$\|\tilde \xi_n\|^2\le \gamma^n\rho_A^2.$$
By Proposition \ref{Ebounds} it follows that
$$
	|v_n|^2=|E\tilde v_n|^2
		\le (\lambda\lambda_1)^{-1} (1+\varepsilon) |A\tilde v_n|^2
		= (\lambda\lambda_1)^{-1} (1+\varepsilon) \|\tilde\xi_n\|^2,
$$
and
$$
	\|\xi_n\|^2=|A v_n|^2
		=|AE\tilde v_n|^2
		\le (1+\varepsilon) |A\tilde v_n|^2
		= (1+\varepsilon) \|\tilde\xi_n\|^2,
$$
where $\tilde \xi_n=\curl\tilde v_n$.
Since $1+\varepsilon<2$ we obtain
$$\|\xi(t_n)\|=\|\xi_n\|\le (1+\varepsilon)^{1/2}\|\tilde \xi_n\|
		\le 2^{1/2} \gamma^{n/2} \rho_A < 2\rho_A.
$$
Following the same arguments as before, we obtain that
$$
	\sup_{s\in[t_n,t_{n+1})} \|\xi(s)\|^3 \le 8 \rho_A^3,
$$
and taking limits as $t\nearrow t_{n+1}$ conclude that
$$\|\tilde \xi_{n+1}\|^2\le\gamma \|\tilde\xi_n\|^2
	\le\gamma^{n+1} \rho_A^2,
$$
which completes the induction.

Given $t>0$ choose $n$ such that $t\in[t_n,t_{n+1})$.
It follows that
$$
	\|W-w\|^2=\|\xi\|^2\le M \|\tilde \xi_n\|^2
		\le M\gamma^n\rho_A^2\to 0\wwords{as} t\to\infty.
$$
Therefore, the same argument used in the proof of Theorem
\ref{mainI} now implies
$$\|W-w\|\to 0\wwords{exponentially as}t\to\infty,$$
and finishes the proof of Theorem \ref{mainII}.
\end{proof}

\section{Conclusions}
In this paper we have shown that spectrally-filtered discrete
data assimilation as described in Definition \ref{algorithm}
results in an approximating solution $u$ that converges to the
reference solution $U$ over time
for any general interpolant observable of type-I or type-II
when $\delta$, $\lambda$ and $h$ are chosen appropriately.
In particular, when observations of $U$ are made using nodal
points of the velocity field, we obtain a type-II interpolant
observable which our analysis is able to handle.
We note that this analysis relies crucially on properties of
the spectral filter and would not have been possible if the
unfiltered interpolants were used instead.
Specifically, our analysis makes use of the fact that
the filtered interpolant $E$ can be made to have norm near
unity when viewed as linear operator on the functional space
implied by the bounds on the original interpolant.
This fact is characterized by the respective inequalities
$$
	\|EU\|^2\le (1+\varepsilon)\|U\|^2
\wwords{and}
	|AEU|^2\le (1+\varepsilon)|AU|^2,
$$
for the type-I and type-II interpolant
observables given in Proposition \ref{Ebounds}.
Different filtering methods which satisfy similar inequalities
should also be effective.
As a number of advances in practical data assimilation have resulted
from improved filtering, we find
these analytic results to be interesting and relevant.

While it may seem anticlimactic that the technique crucial
for our analysis relies on spectrally projecting the interpolant
observable in Fourier space,
since the linear term
is responsible for the dissipation, it is natural that a spectral basis 
with respect to that linearity provides a convenient framework in which 
to analyze the synchronization properties of our data assimilation
algorithm.  Furthermore, using this basis as a means of spatial filtering
not only has the advantage of being simple, but is intrinsically
compatible with the reliance of our analysis on the dissipation.

Note that the functional dependency of $h$ and $\lambda$ on
$\delta$ and the other physical parameters in the system
appearing in Theorem \ref{mainresult}
depend on knowing an {\it a priori\/} bound $\rho_A$ on the norm
$|AU|$ in terms of those other parameters.
While suitable theoretical bounds appear in the literature,
these bounds are, in general, not sharp compared to
{\it a posteriori\/} bounds obtained through
numerical simulation.
Moreover, the algorithm may continue to work with values of $h$ much larger
and values of $\lambda$ much smaller than required by our analysis.
For example, computational experiments performed by \cite{Gesho2016}
for a different spatially filtered continuous data assimilation method
based on nudging show that the method performs far better
than the analytical estimates suggest.
We conjecture similar numerical effectiveness for the discrete
data assimilation method described in the present paper.
Therefore, we refrain from determining an
explicit theoretical relation between $h$ and the Grashof number
in this work, though such could be obtained from our
analysis, and save such comparisons for the context of a
future numerical study.

\section*{Acknowledgments}
The work of Eric Olson was supported in part
by NSF grant DMS-1418928.
The work of Edriss S. Titi was supported
in part by ONR grant N00014-15-1-2333, the Einstein Stiftung/Foundation - Berlin, through the Einstein Visiting Fellow Program, and by the John Simon Guggenheim Memorial Foundation.

\bibliographystyle{abbrv}

\vfill\eject

\appendix
\section{Estimates for Time Dependent Forcing}\label{apriori}
In this appendix we present {\it a priori\/} estimates on the
solution $U$ to the two-dimensional incompressible Navier--Stokes
equations (\ref{2dns}) in the case where the body force $f$
depends on time.
While these results are straight forward, we could not find
suitable references in the literature and have therefore included
them here for completeness of our presentation.
Note that the first bound, stated as Theorem \ref{apriori1} below,
will be sufficient for our analysis in the case of type-I interpolant
operators.
The second bound, Theorem \ref{apriori2}, will be used
for type-II interpolant operators.

In addition to the facts and inequalities from Section \ref{secprelim}
this appendix makes use of Ladyzhenskaya's inequality, which in
two-dimensions interpolates $L^4$ as
\begin{equation}\label{lady}
	\|U\|_{L^4} \le C_0 |U|^{1/2} \|U\|^{1/2},
\end{equation}
where $C_0$ is a non-dimensional constant depending only on $\Omega$.
We also make use of the following $L^2$ and $H^1$ bounds on
the nonlinear term.
\begin{lemma}\label{bh1}
If $U\in V$ then
\begin{equation}\label{lemBp1}
	|B(U,U)|\le C_0^2 |U|^{1/2} \|U\| |AU|^{1/2}.
\end{equation}
Furthermore, if $U\in\DA$ then
\begin{equation}\label{lemBp2}
	\|B(U,U)\|\le C_1 \|U\||AU|+ C_2 |U|^{1/2}|AU|^{3/2}.
\end{equation}
Here $C_0$, $C_1$ and $C_2$ are non-dimensional constants
depending only on $\Omega$.
\end{lemma}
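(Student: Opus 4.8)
The plan is to deduce both estimates from H\"older's inequality, Ladyzhenskaya's inequality \eqref{lady}, and Agmon's inequality \eqref{agmon}, after recording three facts that hold because $\Omega$ is a periodic box and $A=-\Delta$. First, for $U\in V$ each component of $\nabla U$ is a mean-zero $L$-periodic function and, by the Fourier characterization \eqref{alphanorm}, $\|\nabla U\|_{L^2}=\|U\|$; second, for $U\in\DA$ one has $\|\nabla\nabla U\|_{L^2}=|AU|$, since $\sum_{j,l}|k_jk_l|^2=|k|^4$; third, in Fourier variables $P_\sigma$ acts on each mode $\widehat U_k$ by the orthogonal projection of $\C^2$ onto $k^\perp$, and this commutes with multiplication by $ik_j$, so $P_\sigma$ commutes with $\partial_j$ and is an orthogonal projection on each $H^s(\Omega;\R^2)$.

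For \eqref{lemBp1}: since $B(U,U)=P_\sigma(U\cdot\nabla U)$ and $P_\sigma$ is a contraction on $L^2$, H\"older's inequality gives $|B(U,U)|\le\|U\cdot\nabla U\|_{L^2}\le\|U\|_{L^4}\|\nabla U\|_{L^4}$. Applying \eqref{lady} to $U$ yields $\|U\|_{L^4}\le C_0|U|^{1/2}\|U\|^{1/2}$, and applying the same inequality to each component of $\nabla U$ yields $\|\nabla U\|_{L^4}\le C_0\|\nabla U\|_{L^2}^{1/2}\|\nabla\nabla U\|_{L^2}^{1/2}=C_0\|U\|^{1/2}|AU|^{1/2}$; multiplying the two bounds gives \eqref{lemBp1}.

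For \eqref{lemBp2}: using that $P_\sigma$ commutes with $\partial_j$ and is orthogonal on $L^2$, $\|B(U,U)\|=\|\nabla P_\sigma(U\cdot\nabla U)\|_{L^2}=\|P_\sigma\nabla(U\cdot\nabla U)\|_{L^2}\le\|\nabla(U\cdot\nabla U)\|_{L^2}$. The product rule gives the pointwise bound $|\nabla(U\cdot\nabla U)|\le c(|\nabla U|^2+|U|\,|\nabla\nabla U|)$, whence $\|\nabla(U\cdot\nabla U)\|_{L^2}\le c(\|\nabla U\|_{L^4}^2+\|U\|_{L^\infty}\|\nabla\nabla U\|_{L^2})$. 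The first term is at most $C_0^2\|U\|\,|AU|$ by the $L^4$ bound on $\nabla U$ from the previous paragraph, and the second is at most $C|U|^{1/2}|AU|^{1/2}\cdot|AU|=C|U|^{1/2}|AU|^{3/2}$ by Agmon's inequality \eqref{agmon} together with $\|\nabla\nabla U\|_{L^2}=|AU|$. This yields \eqref{lemBp2} with $C_1,C_2$ depending only on $\Omega$.

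The computations are entirely routine; the one point deserving a word of care is that the homogeneous identifications $\|\nabla U\|_{L^2}=\|U\|$, $\|\nabla\nabla U\|_{L^2}=|AU|$ and the commutation $P_\sigma\nabla=\nabla P_\sigma$ genuinely use the periodic geometry, but all three are immediate from the Fourier descriptions of $H$, $A$ and $P_\sigma$ already recalled in the paper. Thus there is no real obstacle here: the lemma is simply bookkeeping around the standard two-dimensional product estimates, isolated so that the appendix's \emph{a priori} bounds can invoke it cleanly.
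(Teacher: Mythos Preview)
Your proof is correct. For \eqref{lemBp1} it coincides with the paper's: H\"older followed by Ladyzhenskaya applied to both $U$ and $\nabla U$. For \eqref{lemBp2} you take a different route. The paper works entirely on the Fourier side: it introduces auxiliary scalars $\Psi_\alpha=\sum_{k\in\J}|k|^\alpha|\widehat U_k|e^{ik\cdot x}$, expands $U\cdot\nabla U$ as a double sum, splits via $|m|\le|m-\ell|+|\ell|$ to obtain $\|B(U,U)\|^2\le 2\|\Psi_1^2\|_{L^2}^2+2\|\Psi_0\Psi_2\|_{L^2}^2$, and then applies Ladyzhenskaya and Agmon to the $\Psi_\alpha$. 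You instead stay in physical space: commute $P_\sigma$ through $\nabla$, apply the product rule to $\nabla(U\cdot\nabla U)$, and bound the two resulting pieces by $\|\nabla U\|_{L^4}^2$ and $\|U\|_{L^\infty}|AU|$ directly. Both approaches isolate the same two contributions and land on the same estimate. Your argument is shorter and avoids the $\Psi_\alpha$ machinery, at the cost of justifying the commutation $P_\sigma\nabla=\nabla P_\sigma$ and tacitly extending \eqref{lady} and \eqref{agmon} to scalar components; the paper's Fourier argument sidesteps $P_\sigma$ altogether by majorizing Fourier coefficients before any projection enters, though it makes the same tacit extension when applying \eqref{lady} and \eqref{agmon} to the scalar $\Psi_\alpha$.
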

\begin{proof}
Given $U\in V$ apply (\ref{lady}) to obtain
$$
	|B(U,U)|\le \|U\|_{L^4} \|\nabla U\|_{L^4}
		\le C_0^2 |U|^{1/2} \|U\| |AU|^{1/2},
$$
which is the first inequality.

Now suppose that $U\in\DA$. Define
$$
	\Psi_\alpha=\sum_{k\in\J} |k|^\alpha|\widehat U_k| e^{ik\cdot x}
\wwords{where}
	U = \sum_{k\in\J} \widehat U_k e^{ik\cdot x}.
$$
Further define $\J_0=\J\cup\{ (0,0)\}$ and recall the notational convention
that $\widehat U_0=0$.
Note that $\|\Psi_\alpha\|_{L^2}=\|U\|_\alpha$ for all $\alpha\le 2$.
Moreover, (\ref{agmon}) and (\ref{lady}) imply that
$$
	\|\Psi_0\|_{L^\infty}\le C \|\Psi_0\|_{L^2}^{1/2}
			\|\Psi_2\|_{L^2}^{1/2}
\wwords{and}
	\|\Psi_1\|_{L^4}\le C_0 \|\Psi_1\|_{L^2}^{1/2}
			\|\Psi_2\|_{L^2}^{1/2}.
$$
Since
$$
	U\cdot\nabla U=i
		\sum_{k,\ell\in\J}
		(\widehat U_k\cdot\ell)\widehat U_\ell\, e^{i(k+\ell)\cdot x}
		=i \sum_{\ell,m\in\J_0}
		\big(\widehat U_{m-\ell}\cdot\ell\big)\widehat U_\ell\, e^{i m\cdot x},
$$
it follows that
\begin{align*}
	\|B(U,U)\|^2&\le
		L^2\sum_{m\in\J_0} |m|^2\Big|
		\sum_{\ell\in\J_0}
		\big(\widehat U_{m-\ell}\cdot\ell\big)\widehat U_\ell
		\Big|^2
		\le
		L^2\sum_{m\in\J_0} \Big|
		\sum_{\ell\in\J_0}
		|m||\widehat U_{m-\ell}||\ell||\widehat U_\ell|
		\Big|^2\\
		&\le
		2L^2\sum_{m\in\J_0} \Big|
		\sum_{\ell\in\J_0}
		|m-\ell||\widehat U_{m-\ell}||\ell||\widehat U_\ell|\Big|^2
		+
		2L^2\sum_{m\in\J_0} \Big|
		\sum_{\ell\in\J_0}
		|\widehat U_{m-\ell}||\ell|^2|\widehat U_\ell|
		\Big|^2\\
		&= 2\|\Psi_1^2\|_{L^2}^2+2\|\Psi_0\Psi_2\|_{L^2}^2
			\le 2 \|\Psi_1\|_{L^4}^4
				+2 \|\Psi_0\|_{L^\infty}^2 \|\Psi_2\|_{L^2}^2\\
		&\le 2C_0^4 \|\Psi_1\|_{L^2}^2\|\Psi_2\|_{L^2}^2
				+ 2 C^2 \|\Psi_0\|_{L^2}\|\Psi_2\|_{L^2} \|\Psi_2\|_{L^2}^2\\
		&=2C_0^4\|U\|^2|AU|^2+2C^2|U||AU|^3.
\end{align*}
Taking $C_1=\sqrt{2}C_0^2$ and $C_2=\sqrt{2}C$ finishes the proof of the lemma.
\end{proof}

\begin{theorem}\label{apriori1}
Suppose
$f\in L^{\infty}([t_0,\infty);H)$
is time-dependent and define
$$
	F=\esssup\big\{|f(t)|^2: t\in[t_0,\infty)\big\}.
$$
Then there are absorbing sets in $H$ and $V$
of radiuses $\rho_H$ and $\rho_V$, respectively, depending
only on $F$, $\Omega$ and $\nu$ such that for every $U_0\in H$
there is a time $t_V$ depending further on $|U_0|$
and $t_0$ for which
$$
	|U(t)|\le\rho_H \words{and} \|U(t)\|\le\rho_V
	\wwords{for all} t\ge t_V.
$$
Moreover,
$$
	\int_t^{t+\delta} |AU(s)|^2 ds \le
		\Big({1\over\nu}+{\delta\lambda_1\over 2}\Big)\rho_V^2
\wwords{for all} t\ge t_V.
$$
\end{theorem}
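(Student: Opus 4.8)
The plan is to run the two classical energy balances for \eqref{2dns}, in $H$ and in $V$, and to combine them by an averaging argument so as to accommodate merely $L^2$ initial data. Because the domain is periodic, both relevant nonlinear contributions drop out, which keeps the argument short and, in particular, removes the need for the uniform Gronwall lemma.

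First I would pair \eqref{2dns} with $U$. By the orthogonality \eqref{borth} the term $\langle B(U,U),U\rangle$ vanishes, leaving $\tfrac12\tfrac{d}{dt}|U|^2+\nu\|U\|^2=(f,U)$. Bounding $(f,U)\le|f|\,|U|$ by Cauchy--Schwarz, then Young's inequality together with the Poincar\'e inequality \eqref{poincareV} to absorb half of $\nu\|U\|^2$ into the left side, gives $\tfrac{d}{dt}|U|^2+\nu\|U\|^2\le F/(\nu\lambda_1)$, and hence, applying \eqref{poincareV} once more, $\tfrac{d}{dt}|U|^2+\nu\lambda_1|U|^2\le F/(\nu\lambda_1)$. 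A Gronwall estimate then yields $|U(t)|^2\le e^{-\nu\lambda_1(t-t_0)}|U_0|^2+F/(\nu^2\lambda_1^2)$, so with, say, $\rho_H^2=2F/(\nu^2\lambda_1^2)$ there is a time $t_H$, depending only on $|U_0|$, $t_0$, $F$, $\nu$ and $\Omega$, after which $|U(t)|\le\rho_H$. Integrating the same differential inequality over $[t,t+1]$ also gives the averaged bound $\nu\int_t^{t+1}\|U(s)\|^2\,ds\le\rho_H^2+F/(\nu\lambda_1)$ for all $t\ge t_H$.

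Next I would pair \eqref{2dns} with $AU$. Here the periodic orthogonality \eqref{balgp} makes $(B(U,U),AU)$ vanish, so $\tfrac12\tfrac{d}{dt}\|U\|^2+\nu|AU|^2=(f,AU)$; Cauchy--Schwarz and Young give $\tfrac{d}{dt}\|U\|^2+\nu|AU|^2\le F/\nu$, and \eqref{poincareDA} turns this into the linear ODE inequality $\tfrac{d}{dt}\|U\|^2+\nu\lambda_1\|U\|^2\le F/\nu$. Since there is no nonlinear feedback, it now suffices to control $\|U\|$ at a single time. For $U_0\in H$ the solution is instantaneously regular, so $\|U(t)\|<\infty$ for $t>t_0$; by the averaged bound just obtained there is $t_1\in(t_H,t_H+1)$ with $\|U(t_1)\|^2\le(\rho_H^2+F/(\nu\lambda_1))/\nu$. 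Feeding this into the linear inequality and using Gronwall gives $\|U(t)\|^2\le e^{-\nu\lambda_1(t-t_1)}\|U(t_1)\|^2+F/(\nu^2\lambda_1)$, so with $\rho_V^2=2F/(\nu^2\lambda_1)$ there is $t_V\ge t_H$, again depending only on $|U_0|$, $t_0$, $F$, $\nu$ and $\Omega$, with $\|U(t)\|\le\rho_V$ and $|U(t)|\le\rho_H$ for all $t\ge t_V$. Finally, integrating $\tfrac{d}{dt}\|U\|^2+\nu|AU|^2\le F/\nu$ over $[t,t+\delta]$ for $t\ge t_V$ and discarding $\|U(t+\delta)\|^2\ge0$ gives $\nu\int_t^{t+\delta}|AU|^2\le\rho_V^2+\delta F/\nu$; dividing by $\nu$ and noting that this choice of $\rho_V$ makes $F/\nu^2=\lambda_1\rho_V^2/2$, the right-hand side equals $(1/\nu+\delta\lambda_1/2)\rho_V^2$, which is the asserted bound.

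The estimates are otherwise routine; the only point requiring any care is the step from $U_0\in H$ to an absorbing ball in $V$, which I handle by the averaging trick rather than the uniform Gronwall lemma --- this shortcut is available precisely because \eqref{balgp} removes the nonlinearity from the $V$-balance. Matching constants so that the last integral bound has the exact stated form $(1/\nu+\delta\lambda_1/2)\rho_V^2$ simply fixes the admissible value $\rho_V^2=2F/(\nu^2\lambda_1)$.
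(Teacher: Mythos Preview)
Your proof is correct and follows essentially the same route as the paper's: both derive the $H$- and $V$-energy balances, drop the nonlinear term in the $V$-balance via the periodic orthogonality \eqref{balgp}, and pass from the $H$ absorbing ball to the $V$ absorbing ball by an averaging argument (you pick a single good time $t_1$ by the mean-value theorem, while the paper integrates the Gronwall inequality with respect to the initial time $s$ over an interval of length $1/(\lambda_1\nu)$ --- two standard variants of the same trick). The resulting constants $\rho_H^2=2F/(\lambda_1^2\nu^2)$ and $\rho_V^2=2F/(\lambda_1\nu^2)$ and the final integral bound all match.
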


\begin{proof}  The proof is essentially the same as the
time-independent case appearing in \cite{Temam1983},
\cite{Constantin1988} or \cite{Robinson2001}
with $F$ is substituted for $|f|$ throughout.
For sake of brevity we present formal estimates
which could be rigorously justified
by means of the Galerkin method if desired.

First, take inner product of (\ref{2dns}) with $U$
and apply Cauchy's inequality followed by Young's inequality
to obtain
$$
	{1\over 2}{d\over dt} |U|^2+\nu\|U\|^2\le |f||U|
	\le
		{\lambda_1\nu\over 2}|U|^2
	+
		{1\over 2\lambda_1\nu}|f|^2.
$$
Collecting terms and applying the Poincar\'e inequality
(\ref{poincareV}) gives
\begin{equation}\label{deHnorm}
	{d\over dt}|U|^2+\nu\|U\|^2\le {F\over\lambda_1\nu}.
\end{equation}
Again applying (\ref{poincareV}), multiplying by
$e^{\lambda_1\nu t}$ and
integrating in time from $t_0$ to $t$ yields
$$
	|U(t)|^2\le e^{-\lambda_1\nu (t-t_0)} |U_0|^2
		+{F\over \lambda_1^2\nu^2} \big( 1-e^{-\lambda_1\nu(t-t_0)} \big).
$$
Upon taking $t_H$ so large that
$$e^{-\lambda_1\nu(t_H-t_0)}|U_0|^2
 	\le {F\over \lambda_1^2\nu^2},
$$
it follows that
$$
	|U(t)|\le\rho_H\words{for} t\ge t_H \wwords{where}
	\rho_H^2= {2F\over \lambda_1^2\nu^2}.
$$
Returning to (\ref{deHnorm}) for $t\ge t_H$ and simply
integrating both sides from $t$ to $t+\delta$ gives
$$
	|U(t+\delta)|^2-|U(t)|^2+\nu \int_t^{t+\delta} \|U(s)\|^2 ds
		\le {\delta F\over\lambda_1\nu}.
$$
Consequently,
\begin{equation}\label{intVbound}
	\int_t^{t+\delta} \|U(s)\|^2 ds
		\le {1\over \nu} |U(t)|^2 + {\delta\over\lambda_1\nu^2}F
		\le
\Big({1\over \nu} + {\delta\lambda_1\over 2} \Big)\rho_H^2.
\end{equation}

Second, take inner product of (\ref{2dns}) with $AU$ and apply
Cauchy's inequality followed by Young's inequality
to obtain
\begin{align*}
	{1\over 2}{d\over dt}\|U\|^2 +\nu|AU|^2
		\le |f||AU|
		\le {\nu\over 2} |AU|^2 + {1\over 2\nu} |f|^2.
\end{align*}
Collecting terms
gives
\begin{equation}\label{deVnorm}
	{d\over dt} \|U\|^2 + \nu |AU|^2 \le {F\over \nu}.
\end{equation}
Again applying (\ref{poincareDA}), multiplying by $e^{\lambda_1\nu t}$
and
integrating in time from $s$ to $t$ yields
$$
	\|U(t)\|^2	
		\le e^{-\lambda_1\nu(t-s)} \|U(s)\|^2
		+{F\over \lambda_1\nu^2}\big(
			1-e^{-\lambda_1\nu(t-s)}\big).
$$
Integrate with respect to $s$ from $t_H$ to $t_H+\delta$
using the fact that
$e^{-\lambda_1\nu(t-s)}\le e^{-\lambda_1\nu(t-t_H-\delta)}$
to obtain
\begin{align*}
	\delta\|U(t)\|^2
		&\le e^{-\lambda_1\nu(t-t_H-\delta)}
	\Big({1\over\nu}+{\delta\lambda_1\over 2}\Big)\rho_H^2
		+{F\delta\over \lambda_1\nu^2}.
\end{align*}
Setting $\delta=1/(\lambda_1\nu)$ yields
$$
	\|U(t)\|^2\le {3\lambda_1\over 2} e^{-\lambda_1\nu(t-t_H)+1}
		\rho_H^2
		+ {F\over\lambda_1\nu^2}.
$$
Upon taking $t_V\ge t_H$ so large that
$$
	{3\lambda_1\over 2}e^{-\lambda_1\nu(t_V-t_H)+1}\rho_H^2 \le {F\over \lambda_1\nu^2},
$$
it follows that
$$
	\|U(t)\|\le\rho_V\words{for}t\ge t_V
\wwords{where}
	\rho_V^2={2F\over \lambda_1\nu^2}.
$$
Returning to (\ref{deVnorm}) for $t\ge t_V$ and simply integrating
both sides from $t$ to $t+\delta$ gives
$$
	\|U(t+\delta)\|^2-\|U(t)\|^2
		+\nu\int_t^{t+\delta} |AU(s)|^2ds
		\le {F\delta\over\nu}.
$$
Consequently,
$$
	\int_t^{t+\delta} |AU(s)|^2ds \le {1\over\nu} \|U(t)\|^2
		+ {F\delta\over \nu^2}
		\le \Big({1\over \nu}+{\delta\lambda_1\over 2}\Big)\rho_V^2.
$$
This completes the proof.
\end{proof}

\begin{theorem}\label{apriori2}
Suppose $f\in L^{\infty}([t_0,\infty),V)$ and
$df/dt\in L^{\infty}([t_0,\infty),V^*)$ and define
$$
	G=\esssup\big\{ \|f(t)\|^2 : t\in [t_0,\infty)\big\}
\words{and}
	F_*=\esssup\big\{ \|df/dt\|_{-1}^2 : t\in [t_0,\infty)\big\}.
$$
Then there is an absorbing set in $\DA$ of radius $\rho_A$
depending only on $G$, $F^*$, $\Omega$ and $\nu$ such that for
every $U_0\in H$ there is a time $t_A$ depending further on
$|U_0|$ and $t_0$ for which
$$
	|AU(t)|\le \rho_A\wwords{for all} t\ge t_A.
$$
Moreover
$$
	\int_t^{t+1/(\lambda_1\nu)} \|AU(s)\|^2ds<  \infty
\wwords{for all} t\ge t_A.
$$
\end{theorem}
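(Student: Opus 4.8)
The plan is to reduce first to the situation of Theorem~\ref{apriori1}, then to bootstrap to an $H^2$ bound by controlling $dU/dt$ through a uniform Gronwall argument applied to the time-differentiated equation, and finally to read off the interval bound on $\|AU\|^2$ directly from the equation. Since $|f|^2\le\lambda_1^{-1}\|f\|^2$ by \eqref{poincareV}, the quantity $F:=\esssup\{|f(t)|^2:t\in[t_0,\infty)\}$ satisfies $F\le\lambda_1^{-1}G<\infty$, so Theorem~\ref{apriori1} applies and supplies $\rho_H$, $\rho_V$, depending only on $G$, $\Omega$, $\nu$, together with a time $t_V$ depending further on $|U_0|$ and $t_0$, beyond which $|U|\le\rho_H$, $\|U\|\le\rho_V$ and $\int_t^{t+r}|AU(s)|^2\,ds\le(\nu^{-1}+\tfrac{r\lambda_1}{2})\rho_V^2$ for every $r>0$. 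As in Theorem~\ref{apriori1}, I would carry out the estimates formally and note that they are made rigorous by the Galerkin method; for $U_0\in H$ only, parabolic smoothing on $(t_0,t_V]$ guarantees that $dU/dt$ lies in $L^\infty_{\mathrm{loc}}(H)\cap L^2_{\mathrm{loc}}(V)$ on $(t_0,\infty)$, which is enough to run the argument from time $t_V$ onward.

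Write $U'=dU/dt$. Differentiating \eqref{2dns} in time gives $U''+\nu AU'+B(U',U)+B(U,U')=df/dt$. Pairing with $U'$, using $(B(U,U'),U')=0$ from \eqref{borth} and $(B(U',U),U')=-(B(U',U'),U)$ from \eqref{balg}, estimating $|(B(U',U'),U)|\le c|U'|^{1/2}\|U'\|^{3/2}|U|^{1/2}\|U\|^{1/2}$ by \eqref{Best}, and then applying Young's inequality to this term and to $\langle df/dt,U'\rangle\le\|df/dt\|_{-1}\|U'\|$, I expect to arrive at
$$
	\frac{d}{dt}|U'|^2+\frac{\nu}{2}\|U'\|^2
	\le\frac{c_\ast}{\nu^3}\,|U|^2\|U\|^2\,|U'|^2+\frac{2}{\nu}\|df/dt\|_{-1}^2 .
$$
For $t\ge t_V$ the coefficient $\tfrac{c_\ast}{\nu^3}|U|^2\|U\|^2$ is bounded by the constant $\tfrac{c_\ast}{\nu^3}\rho_H^2\rho_V^2$; moreover $\int_t^{t+r}|U'|^2$ is finite uniformly in $t$, since $|U'|\le|f|+\nu|AU|+|B(U,U)|$ together with Lemma~\ref{bh1} and the interval bound on $\int|AU|^2$ controls it. Dropping the dissipative term, the displayed inequality has the form $y'\le g\,y+k$ with $g$, $k$ having uniformly bounded integrals over intervals of length $r$ and with $\int_t^{t+r}y$ uniformly bounded, so the uniform Gronwall lemma (see \cite{Temam1997}) produces a constant $C_{U'}$, depending only on $G$, $F_*$, $\Omega$, $\nu$, with $|U'(t)|^2\le C_{U'}$ for all $t\ge t_A:=t_V+r$.

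Granting this, the absorbing bound in $\DA$ follows at once from $\nu AU=f-U'-B(U,U)$: Lemma~\ref{bh1} and Young's inequality give $\tfrac{\nu}{2}|AU|\le|f|+|U'|+\tfrac{C_0^4}{2\nu}|U|\,\|U\|^2$, and evaluating the right-hand side for $t\ge t_A$ bounds $|AU(t)|$ by a constant $\rho_A$ depending only on $G$, $F_*$, $\Omega$, $\nu$. For the final assertion I would apply $A^{1/2}$ to the same identity, using $\|g\|=|A^{1/2}g|$, to obtain $\nu^2\|AU\|^2\le 3\big(\|f\|^2+\|U'\|^2+\|B(U,U)\|^2\big)$ almost everywhere; integrating over $[t,t+1/(\lambda_1\nu)]$ with $t\ge t_A$, the $\|f\|^2$ term is at most $G/(\lambda_1\nu)$, the $\|B(U,U)\|^2$ term is at most a constant multiple of $\rho_A^2/(\lambda_1\nu)$ via Lemma~\ref{bh1} and $|AU|\le\rho_A$, and $\int\|U'\|^2$ is finite by integrating the displayed differential inequality for $|U'|^2$ and using the pointwise bound $|U'|^2\le C_{U'}$.

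I anticipate the main obstacle to be the nonlinear term $(B(U',U'),U)$: its only natural two-dimensional estimate carries the supercritical-looking power $\|U'\|^{3/2}$, which forces the weight $\nu^{-3}$ and at first glance a Gronwall coefficient one cannot integrate. The point that saves the argument is that this coefficient is precisely $|U|^2\|U\|^2$, which the $V$-absorbing ball keeps uniformly bounded. A secondary technical issue is justifying the time-differentiated equation for merely $U_0\in H$, which is dealt with exactly as in the time-independent theory, by smoothing on $(t_0,t_V]$ first and then running the Galerkin estimates.
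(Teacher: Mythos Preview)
Your proposal is correct and follows essentially the same route as the paper: bootstrap via Theorem~\ref{apriori1}, control $|U_t|$ through a uniform Gronwall argument on the time-differentiated equation (using an interval bound on $\int|U_t|^2$ obtained from the original equation), and then read off both $|AU|$ and $\int\|AU\|^2$ from $\nu AU=f-U_t-B(U,U)$ together with Lemma~\ref{bh1}. The only differences are cosmetic---the paper estimates $(B(U_t,U),U_t)$ directly via Ladyzhenskaya \eqref{lady} (giving the lighter Gronwall coefficient $\kappa\|U\|^2$ rather than your $c_*\nu^{-3}|U|^2\|U\|^2$), obtains $\int|U_t|^2$ by testing \eqref{2dns} with $U_t$ instead of from $|U'|\le|f|+\nu|AU|+|B(U,U)|$, and writes out the Gronwall step by hand rather than citing the packaged uniform Gronwall lemma.
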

\begin{proof}
For convenience write $U_t=dU/dt$ and $f'=df/dt$.
We again present our estimates in a formal manner
with the remark that they could be rigorously justified if desired.

First, take inner product of (\ref{2dns}) with $U_t$ and apply
Agmon's inequality (\ref{agmon}) followed
by Young's inequality to obtain
\begin{align*}
|U_t|^2 &+{1\over 2}{d\over dt} \|U\|^2
		=-\big(B(U,U),U_t\big) + (f,U_t)
		\le \|U\|_{L^\infty} \|U\| |U_t| + |f||U_t|\\
	&\le C |U|^{1/2} \|U\| |AU|^{1/2} |U_t| + F^{1/2}|U_t|
	\le {1\over 2}|U_t|^2 + C^2 |U|\|U\|^2|AU| + F.
\end{align*}
Collecting terms, assuming $t\ge t_V$ and applying the results
of Theorem \ref{apriori1} yields
$$
	|U_t|^2+{d\over dt} \|U\|^2 \le 2C^2 \rho_H\rho_V^2|AU|+2F.
$$
Integrate from $t$ to $t+\delta$ and apply the Cauchy-Schwartz
inequality to obtain
\begin{align*}
	\int_t^{t+\delta} |U_t|^2 &+ \|U(t+\delta)\|^2
		\le \|U(t)\|^2+2C^2\rho_H\rho_V^2\int_t^{t+\delta}
			|AU(s)| ds
		+2\delta F\\
		&\le
		\rho_V^2+2C^2\delta^{1/2}\rho_H\rho_V^3
		\Big({1\over \nu}+{\delta\lambda_1\over 2}\Big)^{1/2}
		+2\delta F.
\end{align*}
Setting $\delta=1/(\lambda_1\nu)$ yields
$$
	\int_t^{t+1/(\lambda_1\nu)} |U_t(s)|^2 ds \le \sigma_H^2
\words{where}
	\sigma_H^2= \rho_V^2+
	{1\over \lambda_1\nu}\big(
	6^{1/2}C^2\lambda_1^{1/2}\rho_H\rho_V^3
		+2F\big).
$$

Second, differentiate (\ref{2dns}) with respect to $t$ to get
$$
	U_{tt} + \nu A U_{t} + B(U_t,U)+B(U,U_t) = f'.
$$
Take inner product with $U_t$, note the orthogonality
$\big(B(U,U_t),U_t\big)=0$ and apply Ladyzhenskaya's inequality
(\ref{lady}) followed by Young's inequality to obtain
\begin{align*}
	{1\over 2}{d\over dt}|U_t|^2&+\nu \|U_t\|^2
		=-\big(B(U_t,U),U_t\big)+(f',U_t)
		\le \|U_t\|_{L^4}^2 \|U\| + \|f'\|_{-1} \|U_t\| \\
	&\le C_0^2 |U_t| \|U_t\| \|U\|+ F_*^{1/2} \|U_t\|
	\le {\nu\over 2}\|U_t\|^2+ {C_0^4\over\nu}\|U\|^2|U_t|^2+ {F_*\over \nu}.
\end{align*}
Collecting terms yields
\begin{equation}\label{starstar}
	{d\over dt}|U_t|^2+\nu \|U_t\|^2
	\le \kappa\|U\|^2|U_t|^2+ {2F_*\over \nu}
\wwords{where}
	\kappa= {2C_0^4\over\nu}.
\end{equation}
Multiply by
$$
	\Phi(t)=\exp\Big(-\kappa\int_s^t \|U(\tau)\|^2d\tau\Big),
$$
and integrate from $s$ to $t+\delta$ to obtain
$$
	\Phi(t+\delta) |U_t(t+\delta)|^2 - |U_t(s)|^2 \le
		{2F_*\over\nu} \int_s^{t+\delta} \Phi(\sigma) d\sigma,
$$
or equivalently
\begin{align*}
	|U_t(t+\delta)|^2
		\le |U_t(s)|^2 &\exp\Big(\kappa\int_s^{t+\delta}
			\|U(\tau)\|^2d\tau\Big) \\
		&+{2F_*\over\nu}\int_s^{t+\delta}
			\exp\Big(\kappa\int_{\sigma}^{t+\delta}
				\|U(\tau)\|^2d\tau\Big)d\sigma.
\end{align*}
Integrate with respect to $s$ from $t$ to $t+\delta$.  Since
$t\ge t_V\ge t_H$ inequality (\ref{intVbound}) implies
\begin{align*}
	\delta |U_t(t+\delta)|^2
		&\le \int_t^{t+\delta}\Big\{
			|U_t(s)|^2 \exp\Big(\kappa\int_s^{t+\delta} \|U(\tau)\|^2
				d\tau\Big)\\
		&\qquad\qquad
			+{2F_*\over\nu}\int_s^{t+\delta}
            \exp\Big(\kappa\int_{\sigma}^{t+\delta}
				\|U(\tau)\|^2d\tau\Big)d\sigma
		\Big\}ds\\
		&\le \int_t^{t+\delta}\Big\{
			|U_t(s)|^2 \exp\Big(\kappa\int_t^{t+\delta} \|U(\tau)\|^2
				d\tau\Big)\\
		&\qquad\qquad
			+{2F_*\over\nu}\int_t^{t+\delta}
            \exp\Big(\kappa\int_t^{t+\delta} \|U(\tau)\|^2d\tau\Big)d\sigma
		\Big\}ds \\
		&\le
		\bigg(\int_t^{t+\delta} |U_t(s)|^2ds + {2F_*\delta^2\over\nu}\bigg)
			\exp\Big\{\kappa\Big({1\over\nu}+{\delta\lambda_1\over 2}\Big)\rho_H^2
				\Big\}.
\end{align*}
Setting $t_A=t_V+\delta$ with $\delta=1/(\lambda_1\nu)$ yields
$$
	|U_t(t)| \le R_H\words{for}t\ge t_A
\words{where}
		R_H^2=\Big(\lambda_1\nu\sigma_H^2
		+{2F_*\over\lambda_1\nu^2}\Big)
			\exp\Big({3C_0^4\rho_H^2\over\nu^2}\Big).
$$

We are now ready to estimate $|AU|$.
Upon taking $L^2$ norms
of (\ref{2dns}) and applying (\ref{lemBp1}) from Lemma \ref{bh1}
followed by Young's inequality we obtain
\begin{align*}
	\nu|AU|&\le |U_t|+|B(U,U)|+|f|
		\le |U_t|+C_0^2 |U|^{1/2}\|U\| |AU|^{1/2} + |f|\\
		&\le |U_t|+{C_0^4\over 2\nu} |U| \|U\|^2
			+ {\nu\over 2} |AU|^{1/2} +|f|.
\end{align*}
Therefore,
$$
	|AU|\le \rho_A\words{for}t\ge t_A \wwords{where}
		\rho_A={2\over \nu} R_H + {C_0^4\over \nu^2} \rho_H \rho_V^2
		+ {2F^{1/2}\over\nu}.
$$

To finish the proof, return to (\ref{starstar})
for $t\ge t_A$ and simply integrate
both sides from $t$ to $t+\delta$ to obtain
$$
	|U_t(t+\delta)|^2+\nu\int_t^{t+\delta} \|U_t\|^2
		\le |U_t(t)|^2 + \kappa \int_t^{t+\delta} \|U\|^2 |U_t|^2
			+{2\delta F_*\over \nu}.
$$
Setting $\delta=1/(\lambda_1\nu)$ and applying the previous
bounds for $t\ge t_A$ yields
$$
	\int_t^{t+1/(\lambda_1\nu)} \|U_t\|^2 \le \sigma_V^2
\wwords{where}
	\sigma_V^2
		={R_H^2\over\nu}
		+{3C_0^4\rho_H^2R_H^2 \over \nu^3}
			+ {2F_*\over\lambda_1\nu^3}.
$$
Now, upon taking $H^1$ norms of (\ref{2dns}) and applying
(\ref{lemBp2}) from Lemma \ref{bh1} followed by Young's inequality
we obtain
\begin{align*}
	\nu\|AU\| &\le \|U_t\|+ \|B(u,u)\|+\|f\| \\
		&\le
			\|U_t\|+C_1\|U\||AU|+C_2|U|^{1/2}|AU|^{3/2}+G^{1/2}.
\end{align*}
Consequently
$$
	\|AU\|^2 \le 4 \nu^{-2}\big(
		\|U_t\|^2+ C_1^2\|U\|^2|AU|^2+ C_2^2 |U||AU|^3 +G\big)
$$
implies
$$
	\int_t^{t+1/(\lambda_1\nu)} \|AU\|^2
		\le {4\sigma_V^2\over\nu^2}
			+ 4C_1^2 \Big({3\rho_V^2\over 2\nu^3}\Big)\rho_V^2
			+ 4C_2^2 \rho_H\rho_A\Big({3\rho_V^2\over 2\nu^3}\Big)
			+ {4G\over \lambda_1\nu^3}.
$$
Since the above bound is finite, this finishes the proof.
\end{proof}

\end{document}